\def\qed{\hfill {\hbox{${\vcenter{\vbox{               
   \hrule height 0.4pt\hbox{\vrule width 0.4pt height 6pt
   \kern5pt\vrule width 0.4pt}\hrule height 0.4pt}}}$}}}
\def\utr{\, \underline{\triangleright}\, }
\def\otr{\, \overline{\triangleright}\, }
\newtheorem{theorem}{Theorem}
\newtheorem{proposition}[theorem]{Proposition}
\newtheorem{corollary}[theorem]{Corollary}
\newtheorem{observation}{Observation}
\theoremstyle{definition}
\newtheorem{example}{Example}
\newtheorem{definition}{Definition}
\newtheorem{remark}{Remark}
\date{}
\title{\Large \textbf{Quantum Enhancements and Biquandle Brackets}}
\author{Sam Nelson\footnote{Email: Sam.Nelson@cmc.edu. Partially supported by Simons Foundation collaboration grant 316709}\and
Michael E. Orrison\footnote{Email: orrison@hmc.edu.} \and
Veronica Rivera\footnote{Email: vrivera@g.hmc.edu.}} 
\begin{document}
\maketitle

\begin{abstract}
We introduce a new class of quantum enhancements we call \textit{biquandle 
brackets}, which are customized skein invariants for biquandle colored links.
Quantum enhancements of biquandle counting invariants form a class of knot and 
link invariants that
includes biquandle cocycle invariants and skein invariants such as the 
HOMFLY-PT polynomial as special cases, providing an explicit unification
of these apparently unrelated types of invariants. 
We provide examples demonstrating that 
the new invariants are not determined by the biquandle counting invariant,
the knot quandle, the knot group or the traditional skein invariants.
\end{abstract}

\parbox{5.5in} {\textsc{Keywords:} biquandles, biquandle brackets, 
quantum invariants, quantum enhancements of counting invariants

\smallskip

\textsc{2010 MSC:} 57M27, 57M25}

\section{\large\textbf{Introduction}}\label{I}

Biquandles, algebraic structures with axioms derived from the Reidemeister moves
for oriented knots, were introduced in \cite{FRS} and have been used to define
invariants of oriented knots and links in \cite{FJK,KR} and more. In 
particular, the number of biquandle colorings of an
oriented knot or link diagram $K$ by a finite biquandle $X$ 
defines a nonnegative integer-valued invariant
known as the \textit{biquandle counting invariant}, denoted 
$\Phi_X^{\mathbb{Z}}(K)$. An \textit{enhancement} of $\Phi_X^{\mathbb{Z}}$ is a
generally stronger invariant from which $\Phi_X^{\mathbb{Z}}$ can be
recovered; enhancements have been studied in \cite{BN, CJKLS, HN, NW} to name
just a few.

In \cite{NR} the first and last listed authors introduced the notion of
\textit{quantum enhancements} of $\Phi_X^{\mathbb{Z}}$ defined as quantum
invariants of biquandle-colored knot or link diagrams, focusing on the
unoriented case. In this paper we introduce a new infinite family of quantum 
enhancements using 
\textit{biquandle brackets}, i.e., skein relations which depend on biquandle
colorings. This family of invariants includes biquandle counting invariants,
biquandle (and quandle) cocycle invariants, and classical quantum invariants
such as the Jones and HOMFLYPT polynomials (see for example \cite{L}) as 
special cases. In particular,
we provide examples of \textit{strongly heterogeneous} quantum enhancements,
i.e., solutions to the biquandle-colored Yang-Baxter equation which are not
solutions to the uncolored Yang-Baxter equation, settling a question from 
\cite{NR} and confirming that there are quantum enhancements which are neither
cocycle invariants nor classical skein invariants.

The biquandle bracket conditions we find are very similar to the biquandle 
2-cocycle condition, and indeed biquandle 2-cocycle invariants form a special 
case of biquandle brackets. Moreover, we identify an equivalence relation
on biquandle brackets yielding the same invariant which specializes to the
cohomology relation for biquandle cocycles, even for non-cocycle biquandle 
brackets. Connections between quantum invariants and quandle cocycle invariants
were also studied in \cite{G}.

The paper is organized as follows. In Section \ref{B} we review the basics of 
biquandles and the biquandle counting invariant. In Section \ref{E} we 
define biquandle brackets and provide some examples, including as an
application a new skein invariant with values in the Galois field of eight
elements $\mathbb{F}_8$. In Section \ref{Qb} we consider the special case of 
biquandle brackets when $X$ is a quandle. We end in Section \ref{Q} with some 
open questions for future research. 

\section{\large\textbf{Biquandles}}\label{B}

A \textit{biquandle} is a set $X$ with two binary operations 
$\utr,\otr:X\times X\to X$ satisfying for all $x,y,z\in X$
\begin{itemize}
\item[(i)] $x\utr x=x\otr x$,
\item[(ii)] the maps $\alpha_y,\beta_y:X\to X$ and $S:X\times X\to X\times X$
defined by $\alpha_y(x)=x\otr y$, $\beta_y(x)=x\utr y$ and $S(x,y)=(y\otr x,x\utr y)$ are invertible, and
\item[(iii)] the \textit{exchange laws} are satisfied:
\[\begin{array}{rcl}
(x\utr y)\utr(z\utr y) & = & (x\utr z)\utr (y\otr z) \\
(x\utr y)\otr(z\utr y) & = & (x\otr z)\utr (y\otr z) \\
(x\otr y)\otr(z\otr y) & = & (x\otr z)\otr (y\utr z). 
\end{array}\]
If $x\otr y=x$ for all $x,y\in X$, we say $X$ is a \textit{quandle}.
\end{itemize}
If $X$ and $Y$ are biquandles, then a \textit{biquandle homomorphism} is a map
$f:X\to Y$ such that for all $x,y\in X$, we have
\[f(x\utr y)=f(x)\utr f(y)\quad\mathrm{and}\quad f(x\otr y)=f(x)\otr f(y).\]

The biquandle axioms come from the \textit{Reidemeister moves} where we 
interpret 
$x\utr y$ as $x$ crossing under $y$ and $y\otr x$ as $y$ crossing over $x$
from left to right when the crossing has both strands oriented down as shown.
\[\includegraphics{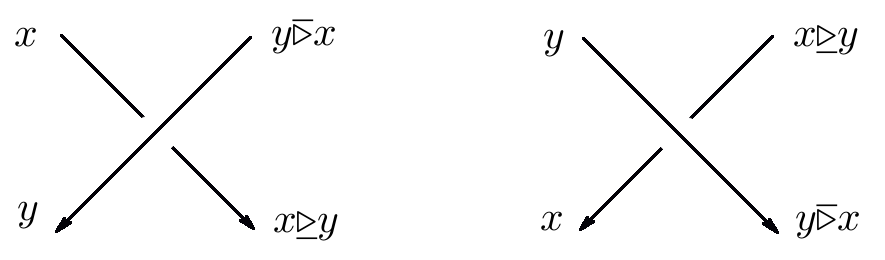}\]

\begin{remark}
Note that there are four oriented Reidemeister I moves, four oriented 
Reidemeister II moves, and eight oriented Reidemeister III moves. In \cite{P} 
several generating sets of oriented Reidemeister moves are identified; by 
Theorem  1.2 of \cite{P}, the set of moves including all four Type I moves, all four type II moves and the single type III move with all positive crossings is a 
generating set of oriented Reidemeister moves. In particular, for biquandles 
and the biquandle brackets in the next section, we will consider only these
nine moves. 
\end{remark}

\begin{remark}
We are using the notation for biquandles from \cite{EN}; note that in the 
literature, particularly in older papers, it was more common to use the 
``downward'' operations rather than our ``sideways'' operations. The newer 
notation is preferable for several reasons: the axioms are more symmetric 
and easier to remember, and the boundary map in biquandle homology is much 
simpler with this notation. See \cite{EN} for more details and further 
discussion.
\end{remark}

Then the biquandle axioms are the conditions required for every valid biquandle
coloring of the semiarcs in a knot diagram before a move to correspond to a 
unique valid biquandle coloring  (i.e., coloring satisfying the condition 
pictured above at every crossing) of the diagram after the move. All four 
oriented type I moves require that $x\utr x=x\otr x$.
\[\includegraphics{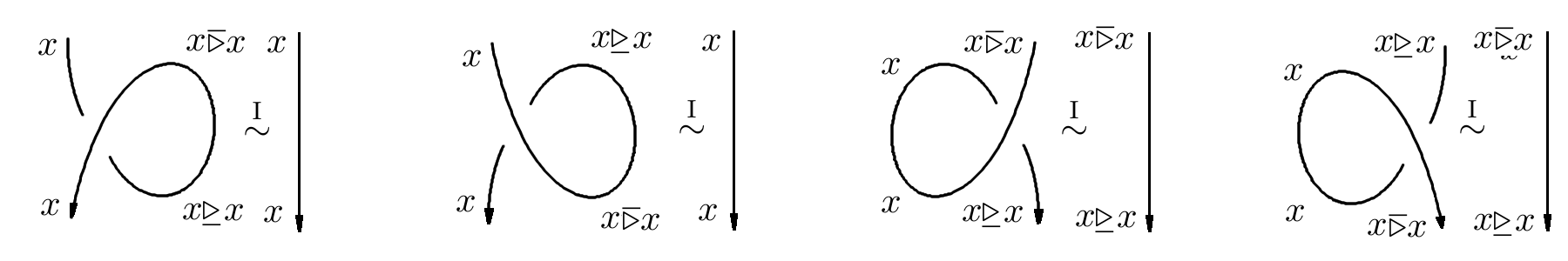}\]
The \textit{direct} type II moves, in which the strands are oriented
in the same direction, require that $y\otr x$ and $x\utr y$ are 
right-invertible.
\[\includegraphics{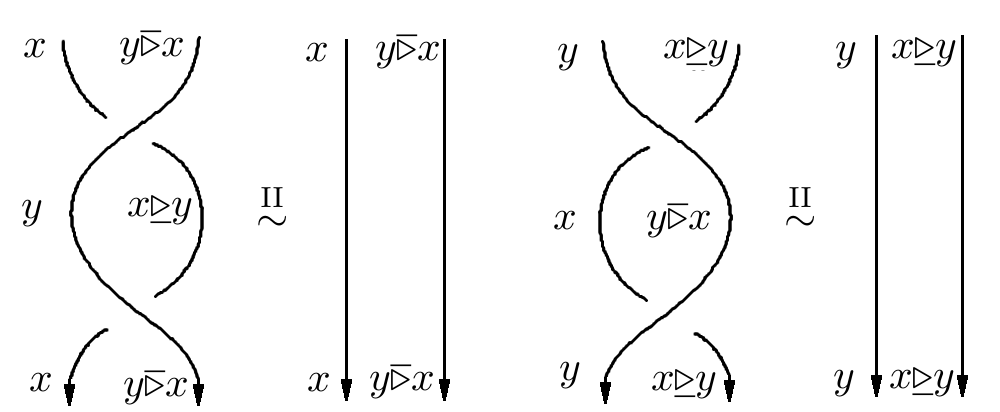}\]
The \text{reverse} type II moves, in which the strands are oriented in
opposite directions, require the map $(x,y)\mapsto(y\otr x,x\utr y)$ to be
invertible.
\[\includegraphics{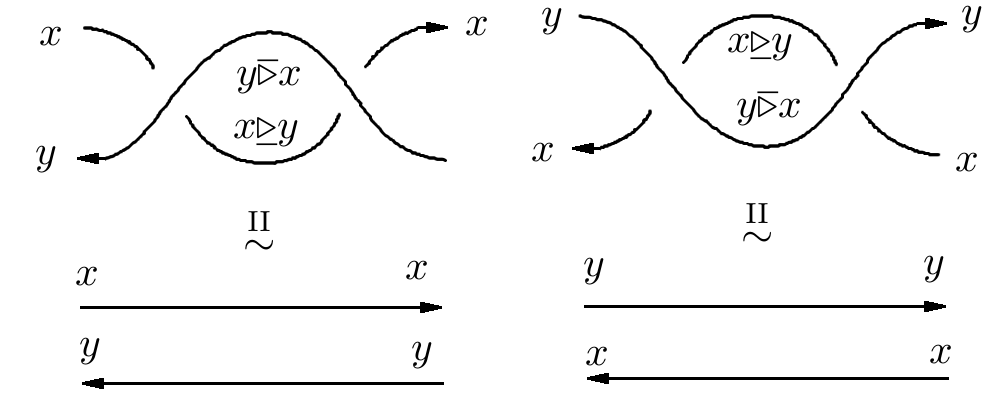}\]
Finally, the exchange laws result from the Reidemeister III move.
\[\includegraphics{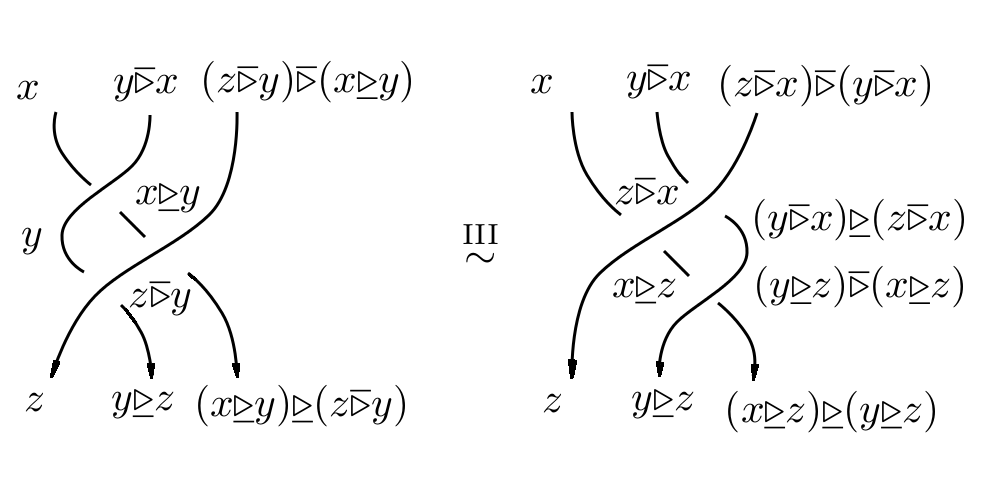}\]

\begin{example}
Let $X$ be any set and $\sigma:X\to X$ any bijection. Then $X$ is a biquandle
with operations
\[x\utr y=x\otr y=\sigma(x)\]
known as a \textit{constant action biquandle}. If $\sigma$ is the identity,
then $X$ is a \textit{trivial quandle}.
\end{example}

\begin{example}
Let $\ddot\Lambda=\mathbb{Z}[t^{\pm 1},r^{\pm 1}]$. Then any $\ddot\Lambda$-module
$A$ is a biquandle, known as an \textit{Alexander biquandle}, under the 
operations
\[x\utr y=tx+(r^{-1}-t)y\quad\mathrm{and}\quad x\otr y= r^{-1}y.\]
In particular, any commutative ring $A$ becomes an Alexander biquandle with a 
choice of invertible elements $t,r\in A$.
\end{example}

We can express the biquandle operations on a set $X=\{x_1,\dots, x_n\}$ with
operation tables for $\utr$ and $\otr$ expressed as an $n\times 2n$ block
matrix such that the entries in row $k$ columns $j$ and $n+j$ are the
subscripts of $x_k\utr x_j$ and $x_k\otr x_j$ respectively.

\begin{example}\label{bex1}
The Alexander biquandle structure on $\mathbb{Z}_5=\{1,2,3,4,5\}$
(where $5$ represents the class of $0$ so our block rows and columns
are numbered $1$ through $5$) with $t=2$ and $r=4$ can be expressed as 
the block matrix
\[\left[\begin{array}{rrrrr|rrrrr}
4 & 1 & 3 & 5 & 2 & 4 & 4 & 4 & 4 & 4 \\
1 & 3 & 5 & 2 & 4 & 3 & 3 & 3 & 3 & 3 \\
3 & 5 & 2 & 4 & 1 & 2 & 2 & 2 & 2 & 2 \\
5 & 2 & 4 & 1 & 3 & 1 & 1 & 1 & 1 & 1 \\
2 & 4 & 1 & 3 & 5 & 5 & 5 & 5 & 5 & 5
\end{array}\right].\]
\end{example}

\begin{example}\label{extref}
Let $L$ be a tame oriented knot or link. The \textit{fundamental biquandle} of
$L$, denoted $\mathcal{B}(L)$, is the set of equivalence classes of biquandle
words in a set of generators corresponding with the semiarcs in a diagram of 
$L$ under the equivalence relation generated by the crossing relations of
$L$ and the biquandle axioms. For instance, the trefoil knot $3_1$ 
\[\includegraphics{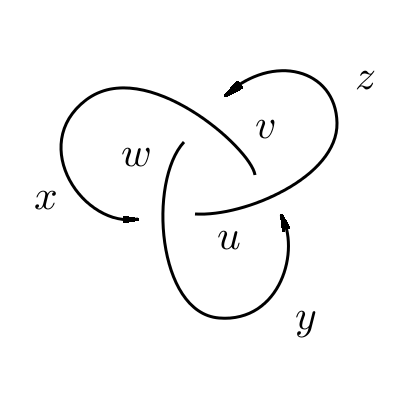}\]
has the fundamental biquandle presentation
\[\mathcal{B}(3_1)=\langle x,y,z,u,v,w \ |\ x\utr y=u, y\otr x= w, y\utr z=v, 
z\otr y=u, z\utr x=w, x\otr z=v\rangle.\]
Then for instance in $\mathcal{B}(3_1)$ we have
\[ (y\otr u)\otr (x\otr u) 
 =  (y\otr x)\otr (u\utr x) 
 =  w\otr (u\utr x).
\]
Different diagrams of the same knot or link yield different presentations 
which differ by Tietze moves and hence present the same biquandle.
\end{example}

Given a finite biquandle $X$ and a tame knot or link diagram 
$L$, a \textit{biquandle coloring} of $L$ is an assignment of elements of
$X$ to the semiarcs in $L$ such that the crossing relations
\[\includegraphics{sn-mo-vr-10.png}\]
are satisfied at every crossing. Such an assignment determines and is 
determined by a 
biquandle homomorphism $f:\mathcal{B}(L)\to X$. In particular, the set of
biquandle colorings of $L$ can be identified with the set 
$\mathrm{Hom}(\mathcal{B}(L),X)$
of biquandle homomorphisms from the fundamental biquandle of $L$ to $X$.
If $L$ is tame, then $\mathcal{B}$ is finitely generated with $2n$ generators
where $n$ is the number of semiarcs in $L$; hence 
$|\mathrm{Hom}(\mathcal{B}(L),X)| \le |X|^{2n}$. We usually write
$|\mathrm{Hom}(\mathcal{B}(L),X)|=\Phi_X^{\mathbb{Z}}(L)\in\mathbb{N}$;
this cardinality is known as the \textit{biquandle counting invariant} 
\cite{BN}.

\begin{example}
The figure 8 knot $4_1$ below has only five valid biquandle
coloring by the Alexander biquandle in example \ref{bex1}, 
as can be determined
by row-reducing over $\mathbb{Z}_5$ the coefficient matrix of the system of
crossing equations or by brute-force checking all possible colorings
and counting those which satisfy the crossing relations. These colorings are 
pictured below:
\[\includegraphics{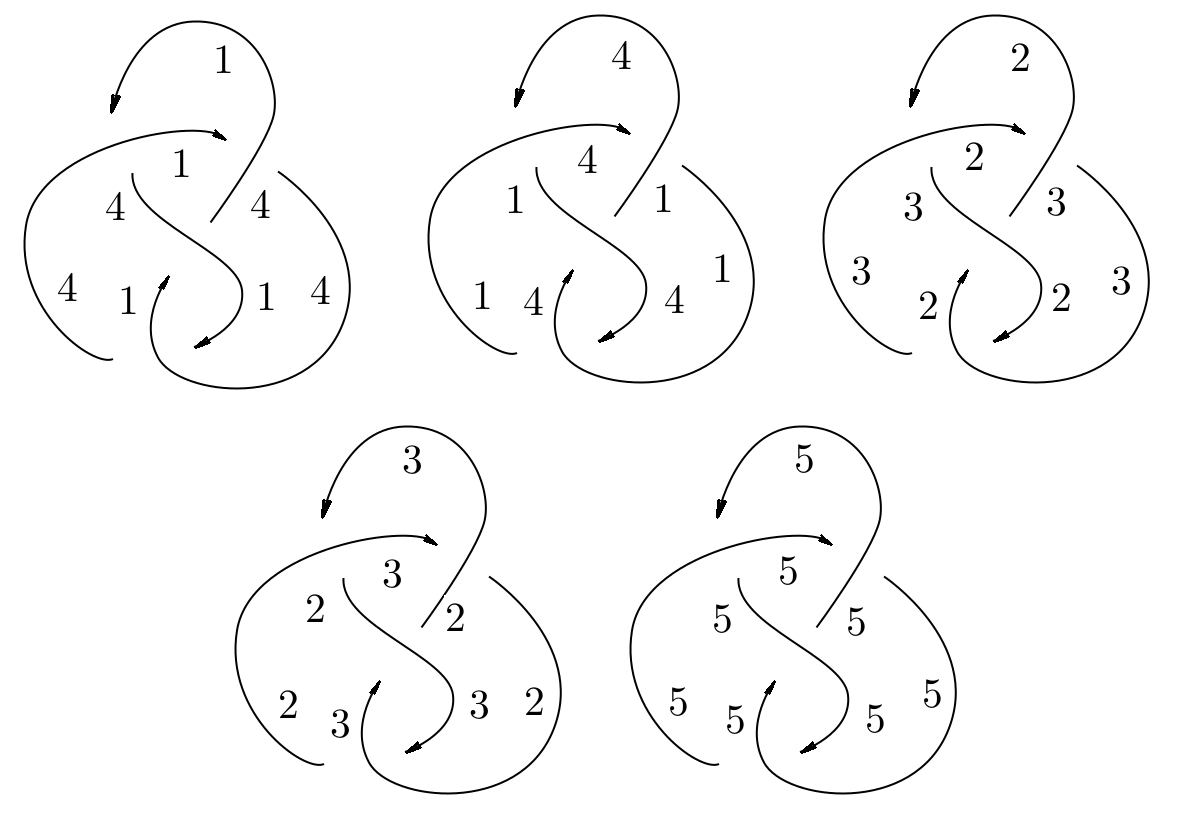}\]
\end{example}

\section{\large\textbf{Biquandle Brackets}}\label{E}

We would like to define a skein invariant (see \cite{L} for instance) for 
biquandle-labeled link diagrams. Let $X$ be a finite biquandle, and let us 
fix a commutative ring with identity $R$ and denote the set of units of $R$ 
as $R^{\times}$. We would like to choose elements $A_{x,y},B_{x,y},w\in R^{\times}$ 
and $\delta\in R$ such that the element of $R$ determined by the skein 
relations
\[\includegraphics{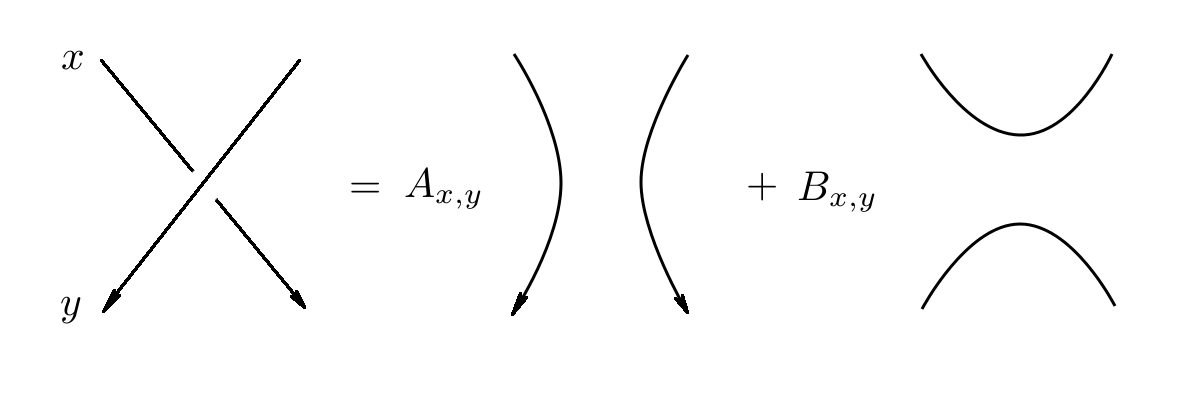}\]
\[\includegraphics{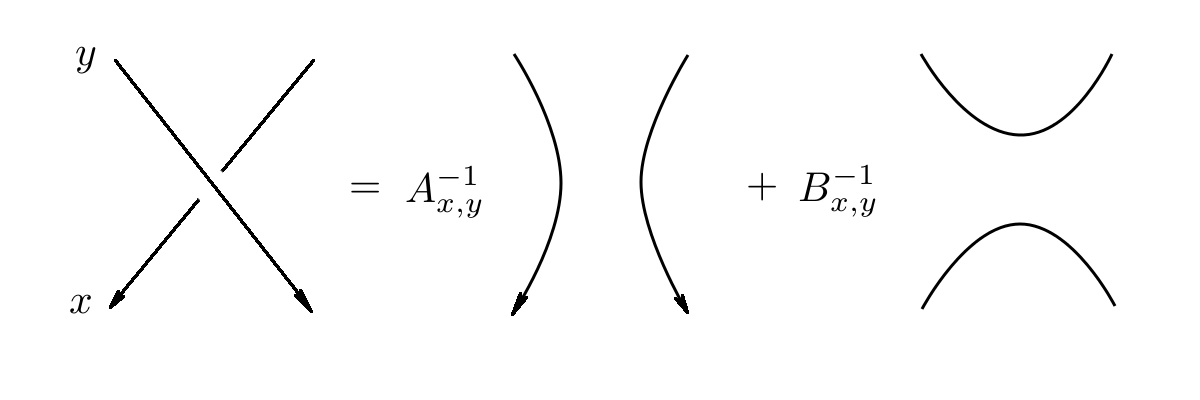}\]
with $\delta$ the value of a simple closed curve and $w$ the value of a 
positive kink is an invariant of $X$-labeled Reidemeister moves. For a given 
choice of $X$ and $R$, we will denote  such a collection of 
$A_{x,y}, B_{x,y},w$ and $\delta$ by $\beta$.

More precisely, for each $X$-coloring $f$ of an oriented link diagram $D$ 
with $c$ crossings, we 
will find the collection of $2^c$ Kauffman states obtained by smoothing all the 
crossings as depicted above; to each such state will be associated a value
in $R$ consisting of the product of the smoothing coefficients at crossings
with $w^{n-p}\delta^k$ where $k$ is the number of circles in the Kauffman state,
$n$ is the number of negative crossings and $p$ is the number of positive
crossings in $D$. The sum of these contributions from each state will be 
denoted $\beta(f)$. The multiset of $\beta(f)$-values over the set of 
$X$-colorings will then be an invariant of oriented knots and links, which we 
will denote by $\Phi_X^{\beta,M}$; here the subscript $X$ indicates the 
coloring biquandle, the $\beta$ specifies the enhancement of the $X$-counting
invariant, and the $M$ indicates the multiset version of the invariant. 

\begin{remark}
It is standard practice for enhancements of counting invariants to be 
expressed in ``polynomial form'' by writing elements of the multiset as 
exponents of a formal variable $u$ with positive integer multiplicities as 
coefficients. We note that while strictly speaking this only defines a genuine 
(Laurent) polynomial in case $R=\mathbb{Z}$, this notation in common in the 
literature -- it was introduced with quandle cocycle invariants in \cite{CJKLS} 
and has been standard ever since, see \cite{BN,CES,CEGN,EN,G,HN,NR,NW} for 
instance.
The invariant written in this format contains the same information as the 
multiset version and has certain advantages; for instance, evaluation of
$\Phi^{\beta}_X$ at $u=1$ (using the rule $1^r=1$ for all $r\in R$) yields the 
cardinality of the multiset version of the invariant , i.e. the 
$X$-counting invariant:
\[\Phi_X^{\mathbb{Z}}(K) =\left.\Phi_X^{\beta}(K)\right|_{u=1}.\]
Moreover, for certain brackets (as we will see later) using
this format enables a factorization of the biquandle bracket polynomial
as a product of a specialization of the Kauffman bracket polynomial
with a biquandle cocycle polynomial. Additionally, the reader may find it 
easier to compare polynomials at a glance than to compare multisets.
Of course, if preferred one can always use the multiset notation; we will
generally use the polynomial form when $R=\mathbb{Z}$ or $\mathbb{Z}_n$ and
the multiset form otherwise.
\end{remark}

Let us address the obvious objection right away: smoothing a crossing in an
$X$-labeled oriented link diagram does indeed result in diagrams without 
coherent biquandle colorings or even orientations. It follows 
that some modifications are needed if we wish to define the invariant 
recursively, as is often done with skein invariants; this will be a topic for
another paper.
However, we note that such a recursive definition
is not necessary, since we can instead simply define the invariant via the 
state-sum formulation, i.e., define a function on oriented $X$-colored link
diagrams by the summing the products of crossing coefficients times 
appropriate powers of $\delta$ and $w$ 
over the set of all completely smoothed states, 
with invariance following from the fact that this value is unchanged 
by $X$-colored Reidemeister moves. We can conceptualize this state-sum 
method as moving the biquandle colors off the crossings and into the 
coefficients for each state.

Recall that the set of four oriented Reidemeister I moves, four oriented 
Reidemeister II moves and the single Reidemeister III move with all positive 
crossings forms a generating set of oriented Reidemeister moves \cite{P}. 
Moreover, we have the following observation:

\begin{observation}
In a state sum defined as a sum over the set of states of the 
product of crossing weights times $\delta^k$ where $k$ is the number of 
circles in a state, local moves which preserve boundary connectivity,
number of circles in each state and local crossing weight (i.e. 
coefficient product) do not change the state sum.
\end{observation}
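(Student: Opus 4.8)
The plan is a locality argument. Write $S(D)=\sum_{s}\bigl(\prod_{c}a_{c}(s)\bigr)\,\delta^{k(s)}$ for the state sum named in the statement, where $s$ ranges over the Kauffman states of $D$, the product runs over all crossings $c$, the scalar $a_{c}(s)\in R^{\times}$ is the smoothing coefficient assigned to $c$ by $s$, and $k(s)$ is the number of circles of $s$. A local move takes place inside a disk $\mathcal{D}$ that meets the rest of $D$ in $2m$ boundary points on $\partial\mathcal{D}$: it replaces the tangle diagram $T$ drawn inside $\mathcal{D}$ by another tangle diagram $T'$ with the same $2m$ boundary points, leaving $D\setminus\mathcal{D}$ fixed; write $D'$ for the result. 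The aim is to show that the contribution of everything outside $\mathcal{D}$ factors out of $S(D)$, so that $S(D)$ depends on $T$ only through a ``local state vector'', and then to read off the three hypotheses as the statement that this vector is the same for $T$ and $T'$.

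First I would separate the inside and the outside of $\mathcal{D}$. Each Kauffman state of $D$ factors uniquely as a pair $(\sigma,\tau)$, where $\sigma$ records the smoothings at the crossings outside $\mathcal{D}$ and $\tau$ those at the crossings of $T$. With $\sigma$ fixed, the smoothed picture outside $\mathcal{D}$ is a fixed disjoint union of embedded arcs and circles; it determines a crossingless matching $\pi(\sigma)$ of the $2m$ boundary points, a number $e(\sigma)$ of circles lying outside $\mathcal{D}$, and a weight $W_{\mathrm{out}}(\sigma)$, the product of the smoothing coefficients at the crossings outside $\mathcal{D}$. Likewise, smoothing $T$ according to $\tau$ produces a crossingless matching $b(\tau)$ of the $2m$ boundary points --- the ``boundary connectivity'' of that local state --- together with a number $i(\tau)$ of circles inside $\mathcal{D}$ and a weight $W(\tau)$, the product of the smoothing coefficients at the crossings of $T$.

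The one substantive point is the circle count: when the smoothed outside (of type $\sigma$) and the smoothed tangle (of type $\tau$) are glued along $\partial\mathcal{D}$, every circle of the resulting state lies entirely outside $\mathcal{D}$, or entirely inside $\mathcal{D}$, or meets $\partial\mathcal{D}$ and is traced out by alternately following arcs of $\pi(\sigma)$ and of $b(\tau)$; hence
\[k(\sigma,\tau)=e(\sigma)+i(\tau)+\ell\bigl(\pi(\sigma),b(\tau)\bigr),\]
where $\ell(\pi,b)$ is the number of loops formed by superimposing the matchings $\pi$ and $b$, a purely combinatorial function of those two boundary matchings alone, with no dependence on the diagrammatic data inside or outside $\mathcal{D}$. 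Summing over all states and grouping the inside sum according to the matching $b$ it realizes,
\[S(D)=\sum_{\sigma}W_{\mathrm{out}}(\sigma)\,\delta^{e(\sigma)}\sum_{b}\delta^{\ell(\pi(\sigma),b)}\,\mathcal{L}_{T}(b),\qquad \mathcal{L}_{T}(b):=\sum_{\tau\,:\,b(\tau)=b}W(\tau)\,\delta^{i(\tau)},\]
with $b$ running over the crossingless matchings of the $2m$ boundary points. The outer sum involves nothing about $T$ beyond the vector $\mathcal{L}_{T}=(\mathcal{L}_{T}(b))_{b}$, and the identical formula holds for $D'$ with $\mathcal{L}_{T'}$ in place of $\mathcal{L}_{T}$. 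Hence $S(D)=S(D')$ as soon as $\mathcal{L}_{T}=\mathcal{L}_{T'}$, and the three hypotheses are exactly a decomposition of this one equality: ``preserves boundary connectivity'' controls which matchings $b$ are realized by local states and matches up the local states realizing each one, ``preserves the number of circles in each state'' matches the exponents $i(\tau)$, and ``preserves local crossing weight'' matches the coefficients $W(\tau)$; together these force $\mathcal{L}_{T}(b)=\mathcal{L}_{T'}(b)$ for every $b$. (A global factor $w^{\,n-p}$, if one chooses to carry it in the full invariant, is unaffected by any move that preserves the numbers of positive and negative crossings, so it plays no role here.)

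I expect all the real content to sit in the circle-count identity, and specifically in the claim that the loops of a glued Kauffman state split cleanly into those living outside $\mathcal{D}$, those living inside $\mathcal{D}$, and those counted combinatorially by $\ell(\pi(\sigma),b(\tau))$. The care required there is that a single loop of a state of $D$ may cross $\partial\mathcal{D}$ many times, so one must verify that following such a loop is the same as traversing one loop of the superimposed pair of boundary matchings; this is where it matters that $T$ and $T'$ share the \emph{same} $2m$ boundary points and that each smoothed tangle induces an honest crossingless matching of them. Everything else is routine bookkeeping.
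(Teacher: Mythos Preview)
Your argument is correct. The factorization $k(\sigma,\tau)=e(\sigma)+i(\tau)+\ell(\pi(\sigma),b(\tau))$ is the heart of the matter, and your justification that each boundary-crossing circle is traced by alternating arcs of the two matchings is exactly right. The subsequent reorganization of the double sum so that the local tangle enters only through the vector $\mathcal{L}_T$ is clean, and your reading of the three hypotheses as forcing $\mathcal{L}_T=\mathcal{L}_{T'}$ is the intended one.

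As for comparison with the paper: there is nothing to compare. The paper labels this an \emph{Observation} and offers no proof at all; it is asserted as evident and immediately used to reduce invariance to checking local crossing weights across the nine Reidemeister moves. Your write-up supplies the rigorous locality argument that the paper leaves implicit. In that sense your approach is not different from the paper's so much as it is a fleshing-out of what the paper takes for granted. The one thing you might tighten is the phrasing around the hypotheses: the paper's wording ``preserve \dots\ in each state'' tacitly assumes a bijection between the local states of $T$ and those of $T'$ under which the three quantities agree state-by-state, and you have correctly inferred and used this, but it would not hurt to say so explicitly.
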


Thus, we can find the conditions a biquandle bracket $\beta$ must satisfy to
define an invariant by identifying conditions such that the local crossing 
weights are preserved by the above-identified list of Reidemeister moves.

The first Reidemeister move comes in four oriented versions; the two positively 
oriented moves require that for all $x\in X$, we have $A_{x,x}\delta+B_{x,x}=w$,
while the negatively oriented moves require $A_{x,x}^{-1}\delta+B_{x,x}^{-1}=w^{-1}$.
In particular, we can think of writhe-reducing type I moves as factoring out 
powers of $w$ and writhe-increasing type I moves as factoring out powers
of $w^{-1}$.
\[\includegraphics{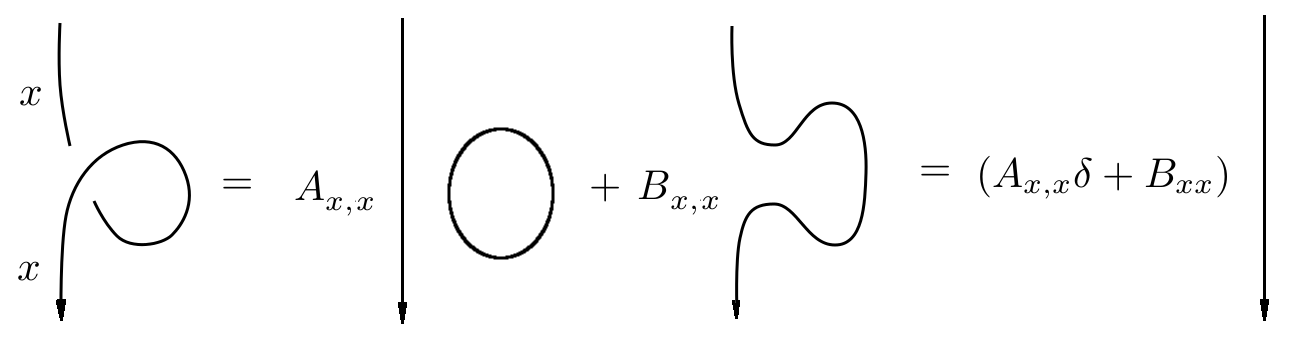}\]
The direct type II moves require the oriented smoothing coefficients at 
positive and negative crossings to be multiplicative inverses, with the reverse
II moves requiring the same of the unoriented smoothing coefficients; all four 
moves then require that $\delta=-A_{x,y}B_{x,y}^{-1}-A_{x,y}^{-1}B_{x,y}$.
\[\includegraphics{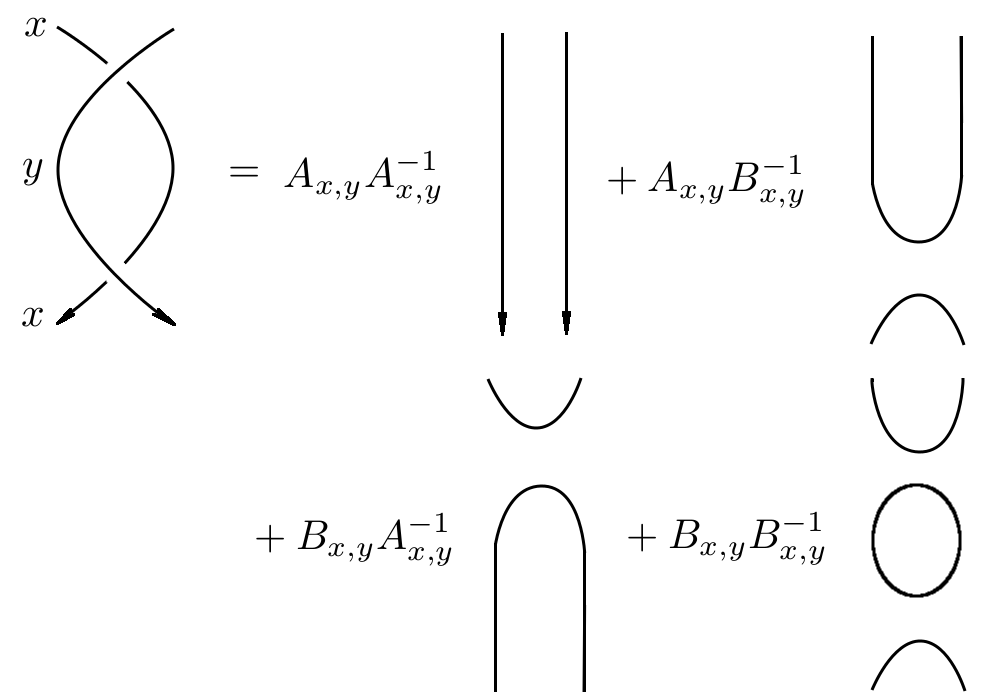}\]
\[\includegraphics{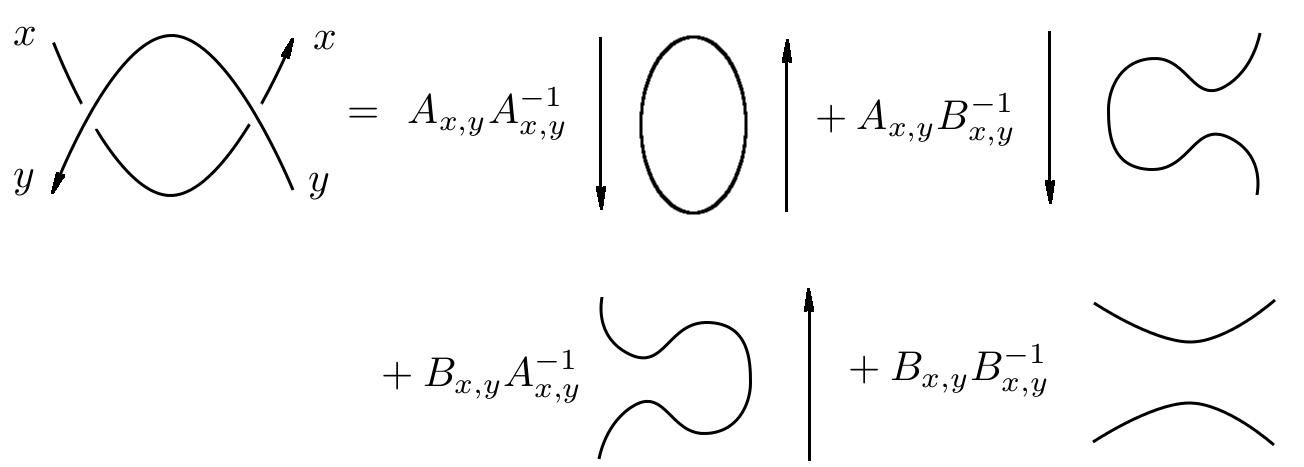}\]

Comparing coefficients of the five crossingless diagrams 
\[\includegraphics{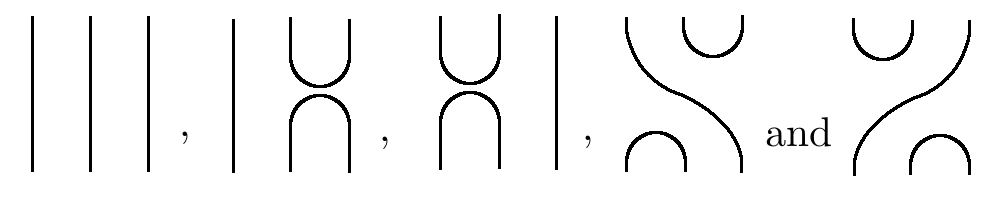}\]
on both sides of the $X$-labeled Reidemeister III move, we have on the left side
\[\includegraphics{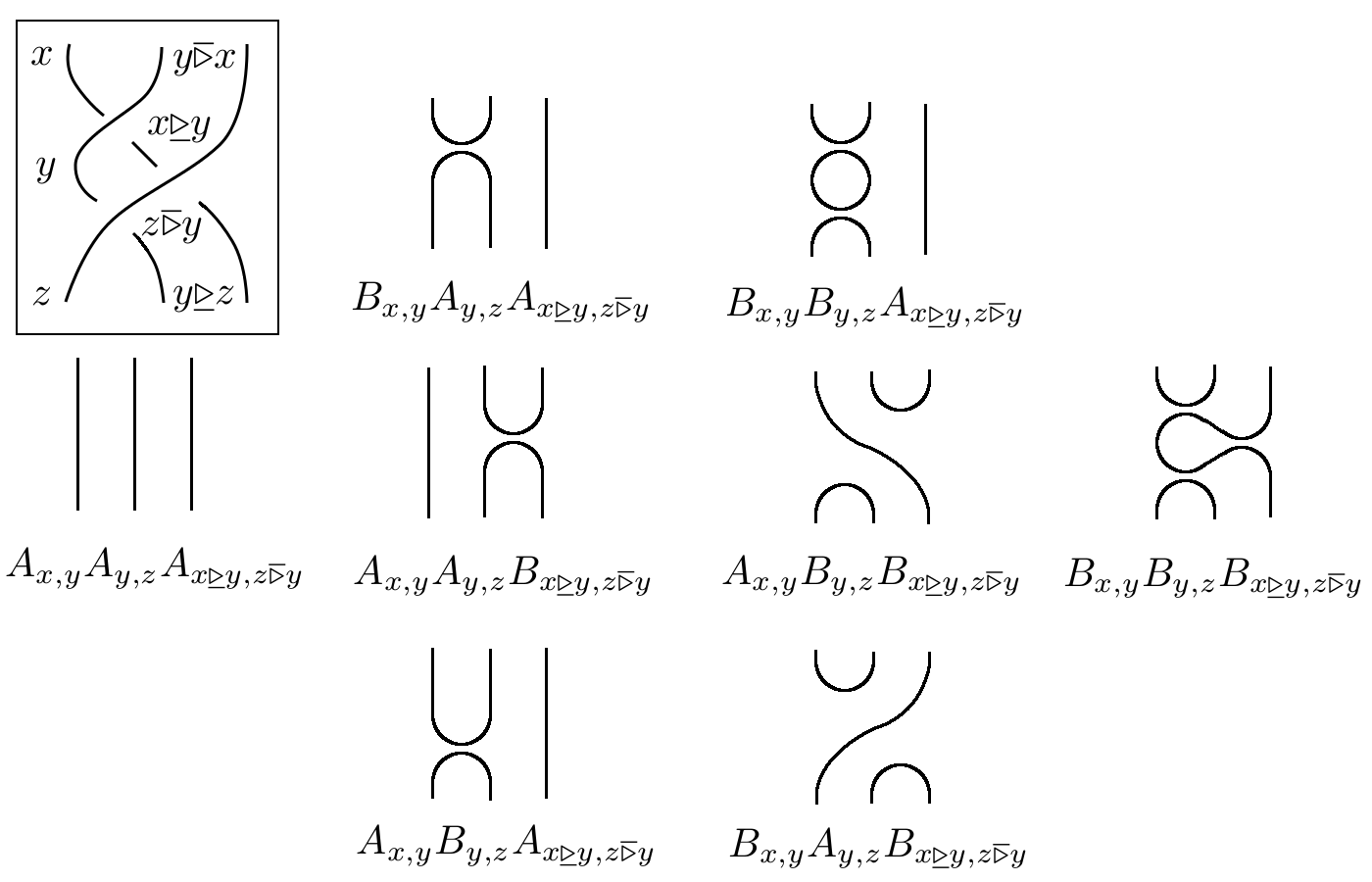}\]
and on the right side
\[\includegraphics{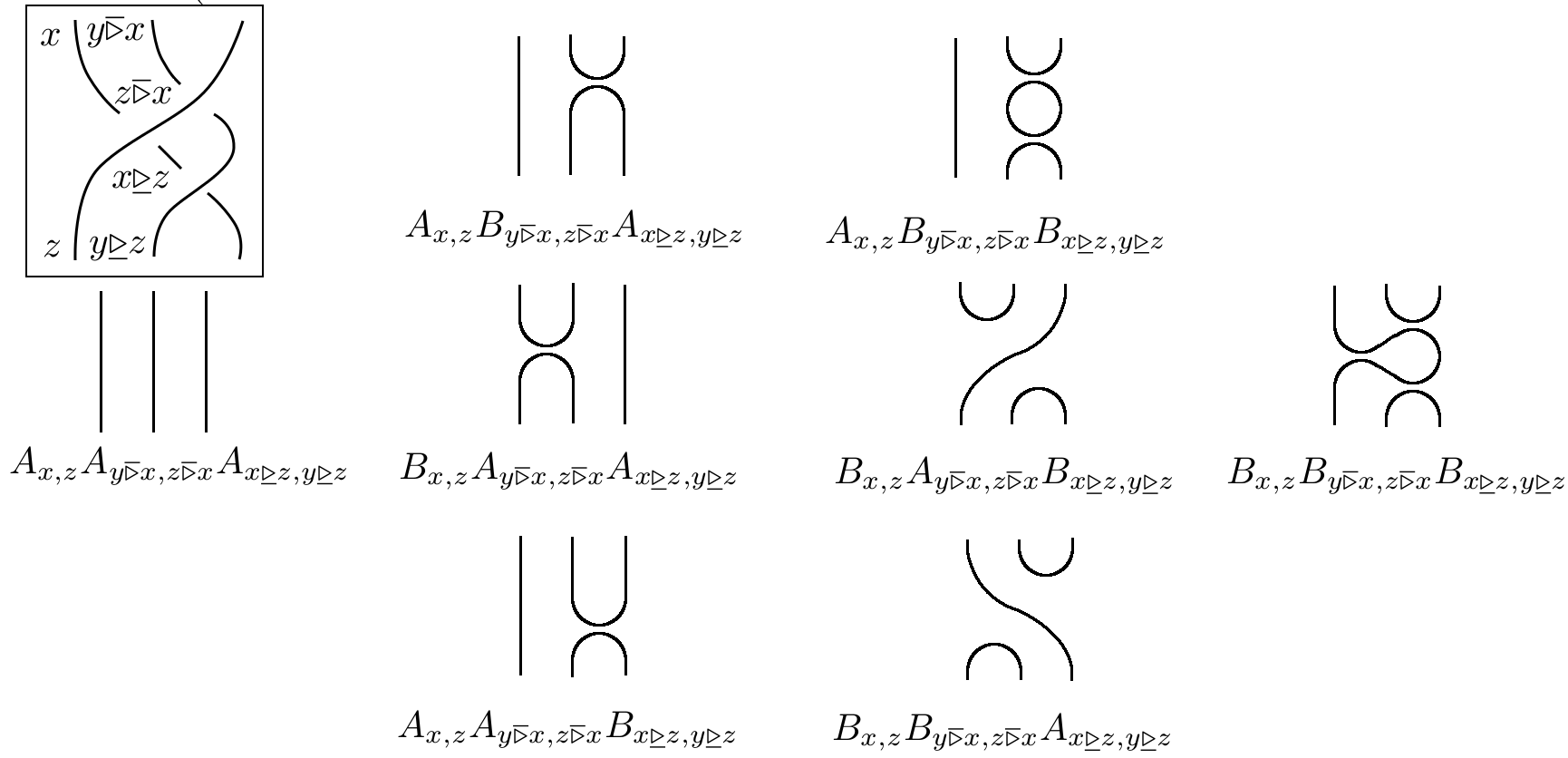}\]
yielding the remaining conditions on $A_{x,y}$ and $B_{x,y}$.
We thus obtain
\begin{definition}
Let $X$ be a finite biquandle and $R$ be a commutative ring with identity. A 
\textit{biquandle bracket} on $X$ with values in $R$, also called an 
\textit{$X$-bracket}, is a pair of maps $A,B:X\times X\to R^{\times}$ and 
distinguished elements $\delta\in R$ and $w\in R^{\times}$ satisfying
\begin{itemize}
\item[(i)] for all $x\in X$,
\[\delta A_{x,x}+B_{x,x}=w\quad \mathrm{and}\quad \delta A_{x,x}^{-1}+B_{x,x}^{-1}=w^{-1}\]
\item[(ii)] for all $x,y\in X$,
\[\delta=-A_{x,y}B_{x,y}^{-1}-A_{x,y}^{-1}B_{x,y}\]
and 
\item[(iii)] for all $x,y,z\in X$, 
\[\begin{array}{rcl}
A_{x,y}A_{y,z}A_{x\utr y,z\otr y} & = & A_{x,z}A_{y\otr x,z\otr x}A_{x\utr z,y\utr z} \\
A_{x,y}B_{y,z}B_{x\utr y,z\otr y} & = & B_{x,z}B_{y\otr x,z\otr x}A_{x\utr z,y\utr z} \\
B_{x,y}A_{y,z}B_{x\utr y,z\otr y} & = & B_{x,z}A_{y\otr x,z\otr x}B_{x\utr z,y\utr z} \\
A_{x,y}A_{y,z}B_{x\utr y,z\otr y} & = & 
A_{x,z}B_{y\otr x,z\otr x}A_{x\utr z,y\utr z} 
+A_{x,z}A_{y\otr x,z\otr x}B_{x\utr z,y\utr z} \\ 
& & +\delta A_{x,z}B_{y\otr x,z\otr x}B_{x\utr z,y\utr z} 
+B_{x,z}B_{y\otr x,z\otr x}B_{x\utr z,y\utr z} \\
B_{x,y}A_{y,z}A_{x\utr y,z\otr y} 
+A_{x,y}B_{y,z}A_{x\utr y,z\otr y} & & \\
+\delta B_{x,y}B_{y,z}A_{x\utr y,z\otr y} 
+B_{x,y}B_{y,z}B_{x\utr y,z\otr y}  
& = & B_{x,z}A_{y\otr x,z\otr x}A_{x\utr z,y\utr z} \\
\end{array}\]
\end{itemize}
where $A(x,y)$ and $B(x,y)$ are denoted $A_{x,y}$ and $B_{x,y}$.
\end{definition}

Given a finite biquandle $X=\{x_1,\dots,x_n\}$, an $X$-bracket can be 
represented by a pair of $n\times n$ matrices $A,B$ with $A_{j,k}=A(x_j,x_k)$
and $B_{j,k}=B(x_j,x_k)$. We will usually write these as a single $n\times 2n$
block matrix for convenience. Note that we can recover $\delta$ and $w$
from such a matrix, with 
\[\delta=-A_{1,1}B_{1,1}^{-1}-A_{1,1}^{-1}B_{1,1}
\quad \mathrm{and}\quad w=A_{1,1}\delta+B_{1,1}.\]

\begin{example}
Let $X=\{1\}$ be the biquandle with one element. Then the matrix 
\[\left[\begin{array}{r|r}
A & A^{-1}
\end{array}\right]\]
where $A\in\mathbb{Z}[A^{\pm 1}]$ is an invertible variable
defines a biquandle bracket with
\[\delta=-A(A^{-1})^{-1}-(A^{-1})A^{-1}=-A^2-A^{-2}
\quad \mathrm{and}\quad w=A(-A^2-A^{-2})+A^{-1}=-A^3.\]
Indeed, this is the Kauffman bracket (see for example \cite{L,S}).
\end{example}

\begin{example}\label{ex:coboundary}
Let $X$ be a finite  biquandle, $R$ be a commutative ring, and $C:X\to R^{\times}$
be a map where we write $C_x$ for $C(x)$. Then the maps 
$A,B:X\times X\to R^{\times}$ defined by
\[A_{x,y}=B_{x,y}=C_xC_y^{-1}C_{x\utr y}^{-1}C_{y\otr x}\]
for all $x,y\in X$ define a biquandle bracket with $\delta=-2$ and $w=-1$.
To see this, we note that if $A_{x,y}=B_{x,y}$, we necessarily have $\delta=-2$,
and biquandle bracket axiom (iii)'s five equations all reduce to the first
equation, namely
\[A_{x,y}A_{y,z}A_{x\utr y, z\otr y}  = A_{x,z}A_{y\otr x,z\otr x}A_{x\utr z, y\utr z}.\]
Then
\begin{eqnarray*}
A_{x,y}A_{y,z}A_{x\utr y, z\otr y} & = & 
(C_xC_y^{-1}C_{x\utr y}^{-1}C_{y\otr x})
(C_yC_z^{-1}C_{y\utr z}^{-1}C_{z\otr y})
(C_{x\utr y}C_{z\otr y}^{-1}C_{(x\utr y)\utr (z\otr y)}^{-1}C_{(z\otr y)\otr (x\utr y)})\\
& = & 
C_xC_{y\otr x}
C_z^{-1}C_{y\utr z}^{-1}
C_{(x\utr y)\utr (z\otr y)}^{-1}C_{(z\otr y)\otr (x\utr y)}
\end{eqnarray*}
while
\begin{eqnarray*}
A_{x,z}A_{y\otr x,z\otr x}A_{x\utr z, y\utr z} 
& = &
(C_xC_z^{-1}C_{x\utr z}^{-1}C_{z\otr x})
(C_{y\otr x}C_{z\otr x}^{-1}C_{(y\otr x)\utr(z\otr x)}^{-1}C_{(z\otr x)\otr(y\otr x)})C_{x\utr z}) \\
& & \quad \quad \quad \times\  (C_{y\utr z}^{-1}C_{(x\utr z)\utr(y\utr z)}^{-1}C_{(y\utr z)\otr(x\utr z)})
\\
& = &
C_xC_z^{-1}
C_{y\otr x}C_{(z\otr x)\otr(y\otr x)}
C_{y\utr z}^{-1}C_{(x\utr z)\utr(y\utr z)}^{-1}
\end{eqnarray*}
which are equal by the exchange laws.  As we will later see, this type of 
biquandle bracket is actually a biquandle 2-coboundary; in particular, 
two biquandle brackets which differ by a coboundary are ``cohomologous''
and define the same invariant.
\end{example}

We now introduce the first of our new invariants.

\begin{definition}
Let $L$ be an oriented knot or link diagram with $n$ crossings with
generators $x_1,\dots, x_{2n}$ for the fundamental biquandle $\mathcal{B}(L)$
associated to the semiarcs. There are $2^n$ \textit{states} corresponding to
choices of oriented or unoriented smoothing for each crossing, each of
which has an associated product of $n$ factors of $A_{x,y}^{\pm 1}$ or 
$B_{x,y}^{\pm 1}$
times $\delta^k$ where $k$ is the number of components of the state. The 
sum of these contributions times the \textit{writhe correction factor},
$w^{n-p}$, is the \textit{fundamental biquandle bracket value} for $L$.
\end{definition}

\begin{example} The Hopf link $L2a1$ below has four smoothed states with 
coefficients as listed.
\[\includegraphics{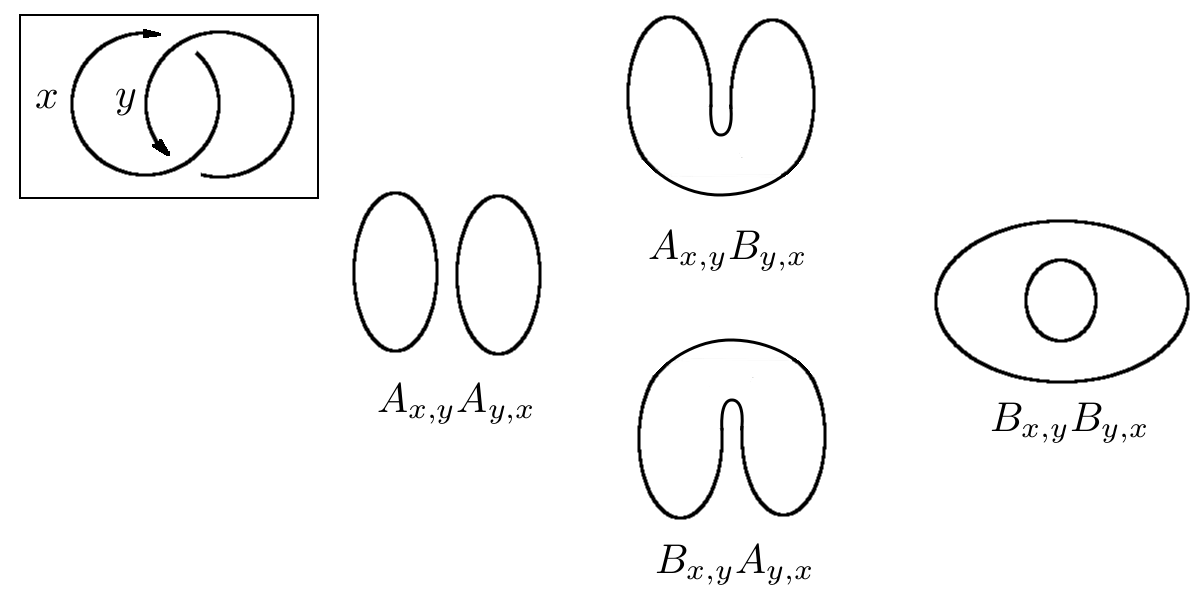}\] 
Then the Hopf link has fundamental biquandle bracket value
\[\phi=w^{-2}(A_{x,y}A_{y,x}\delta^2+B_{x,y}A_{y,x}\delta+A_{x,y}B_{y,x}\delta
+B_{x,y}B_{y,x}\delta^2)\]
where $x,y$ are generators of the fundamental biquandle 
$\mathcal{B}(L2a1)=\langle x,y \ |\ x\utr y=x\otr y,\ y\utr x=y\otr x \rangle$.
\end{example}

The fundamental biquandle bracket treats every knot or link as colored by 
elements of its fundamental biquandle. This fundamental biquandle bracket 
may be a complete invariant of 
knots since it includes
the fundamental biquandle and hence the fundamental quandle, already known 
to be a complete invariant for virtual knots 
up to a type of reflection \cite{HK}, and our later
examples demonstrate that the fundamental biquandle bracket can detect mirror 
images. However, comparing fundamental biquandle bracket values for different
knots and links is not straightforward since any two such links are being 
colored by generally different biquandles.

To get a more immediately useful invariant, let $X$ be a finite biquandle.
For any $X$-bracket $\beta$ over $R$, evaluating the fundamental biquandle
bracket value of an $X$-coloring $f$ of an oriented link diagram $L$ yields an 
element of $R$ which is unchanged by $X$-colored Reidemeister moves on $L$; 
let us denote this value by $\beta(f)$.

\begin{definition} 
Let $X$ be a finite biquandle, $L$ an oriented link and $\beta$ a biquandle 
bracket. Then the \textit{biquandle bracket multiset} invariant of $L$ is
the multiset of $\beta$-values over the set of $X$-labelings of $L$,
\[\Phi_X^{\beta,M}(L)=\{\beta(f)\ |\ f\in\mathrm{Hom}(\mathcal{B}(L),X)\}\]
and the \textit{biquandle bracket polynomial} invariant of $L$ is
\[\Phi_X^{\beta}(L)=\sum_{f\in\mathrm{Hom}(\mathcal{B}(L),X)} u^{\beta(f)}.\]
\end{definition}

\begin{remark}
Again, note that in this ``polynomial'' form, the coefficients are integers, 
$u$ is a formal variable and the exponents of $u$ are elements of $R$.
\end{remark}

\begin{proposition}
Let $X$ be a finite biquandle and let $\beta$ and $\beta'$ be $X$-brackets 
over $R$ defined by maps $A,B:X\times X\to R^{\times}$ and
$A',B':X\times X\to R^{\times}$ respectively. If there is an invertible 
scalar $\alpha\in R^{\times}$ such that for all $x,y\in X$ we have
\[A_{x,y}=\alpha A_{x,y}' \quad\mathrm{and}\quad B_{x,y}=\alpha B_{x,y}' \]
then the link invariants defined by $\beta$ and $\beta'$ are equal.
\end{proposition}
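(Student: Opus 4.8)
The plan is to reduce everything to the pointwise statement that $\beta(f)=\beta'(f)$ for every oriented link diagram $L$ and every coloring $f\in\mathrm{Hom}(\mathcal{B}(L),X)$. Since the indexing set $\mathrm{Hom}(\mathcal{B}(L),X)$ over which the multiset $\Phi_X^{\beta,M}(L)$ is formed depends only on $X$ and $L$ and not on the bracket, this pointwise equality at once yields $\Phi_X^{\beta,M}(L)=\Phi_X^{\beta',M}(L)$ for all $L$, and hence also $\Phi_X^{\beta}(L)=\Phi_X^{\beta'}(L)$ in polynomial form.

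First I would record how rescaling affects the two derived constants. Because $\delta$ is recovered from a bracket by $\delta=-A_{x,y}B_{x,y}^{-1}-A_{x,y}^{-1}B_{x,y}$, an expression unchanged when $A_{x,y}$ and $B_{x,y}$ are both multiplied by the common unit $\alpha$, the two brackets share the same $\delta$: $\delta=\delta'$. Then $w=A_{1,1}\delta+B_{1,1}=\alpha A_{1,1}'\delta'+\alpha B_{1,1}'=\alpha w'$. (The hypothesis already guarantees $\beta'$ is an $X$-bracket, so $\beta'(f)$ is well defined; one could also see this directly, since each of axioms (i)--(iii) is balanced in the sense that its two sides have the same total degree in $A$ and $B$.)

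Next I would carry out the exponent bookkeeping in the state sum. Fix $L$ with $p$ positive and $m$ negative crossings, and a coloring $f$, so that
\[\beta(f)=w^{\,m-p}\sum_{S}\Big(\prod_{\text{crossings of }L}c_S\Big)\delta^{\,|S|},\]
where $S$ runs over the $2^{p+m}$ states, $|S|$ is the number of circles of $S$, and the factor $c_S$ contributed by a crossing equals $A_{x,y}^{\pm1}$ or $B_{x,y}^{\pm1}$ --- with exponent $+1$ at a positive crossing and $-1$ at a negative crossing --- determined by the resolution chosen in $S$ together with the colors $f$ assigns the incident semiarcs. Substituting $A=\alpha A'$ and $B=\alpha B'$ multiplies $c_S$ by $\alpha$ at each positive crossing and by $\alpha^{-1}$ at each negative crossing, hence multiplies $\prod c_S$ by $\alpha^{p}\alpha^{-m}=\alpha^{\,p-m}$ for \emph{every} state $S$, uniformly, because each crossing of $L$ is resolved exactly once in $S$; meanwhile $\delta^{|S|}=(\delta')^{|S|}$ is unchanged, and the writhe factor satisfies $w^{\,m-p}=\alpha^{\,m-p}(w')^{\,m-p}$. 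The net power of $\alpha$ is therefore $\alpha^{\,m-p}\cdot\alpha^{\,p-m}=1$, and so $\beta(f)=\beta'(f)$.

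I do not expect a genuine obstacle; the one place to be careful is exactly this cancellation --- verifying that the scalar $\alpha^{p-m}$ picked up by the product of crossing coefficients (the same for all $2^{p+m}$ states) is precisely inverted by the scalar $\alpha^{m-p}$ coming from the writhe correction factor $w^{m-p}$. With $\beta(f)=\beta'(f)$ established for all $f$, the multisets $\{\beta(f)\mid f\in\mathrm{Hom}(\mathcal{B}(L),X)\}$ and $\{\beta'(f)\mid f\in\mathrm{Hom}(\mathcal{B}(L),X)\}$ agree for every $L$, as do the sums $\sum_f u^{\beta(f)}$ and $\sum_f u^{\beta'(f)}$, which is the claim. \qed
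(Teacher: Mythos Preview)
Your proposal is correct and follows essentially the same approach as the paper: show $\delta'=\delta$, $w=\alpha w'$, and then verify that in the state sum the uniform factor of $\alpha^{p-m}$ picked up by the crossing coefficients is exactly cancelled by the $\alpha^{m-p}$ in the writhe correction, so $\beta(f)=\beta'(f)$ for every coloring $f$. If anything, your exponent bookkeeping is cleaner than the paper's.
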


\begin{proof}
First, we note that 
\[\delta'
=-A_{x,y}'B_{x,y}'^{-1}-A_{x,y}'^{-1}B_{x,y}'
=-(\alpha A_{x,y})(\alpha B_{x,y})^{-1}-(\alpha A_{x,y}^{-1})\alpha B_{x,y}
=-A_{x,y}B_{x,y}^{-1}-A_{x,y}^{-1}B_{x,y}
=\delta
\]
and
\[w'=A_{x,x}'\delta +B_{x,x}'=\alpha A_{x,x}\delta +\alpha B_{x,x}=\alpha w.\]
Then for any link diagram $L$ with $j$ positive crossings and $k$ negative
crossings, the state sum with $\beta'$ equals that with $\beta$ multiplied 
by $\alpha^{j-k}$ at every crossing. Then the contribution $\beta'(f)$ equals
$\beta(f)$ multiplied by $\alpha^{k-k}$, then multiplied by $\alpha^{k-j}$ in 
the writhe-correction factor $(w')^{k-j}$; hence, the powers of $\alpha$ cancel 
and we have $\beta(f)=\beta'(f)$, whence
$\Phi_x^{\beta}(L)=\Phi_x^{\beta'}(L)$ for all classical and virtual knots and 
links $L$.
\end{proof}

\begin{example}\label{ex1}
The simplest non-trivial biquandle is the constant action biquandle on 
$X=\{1,2\}$ with operation matrix
\[\left[\begin{array}{rr|rr}
2 & 2 & 2 & 2 \\
1 & 1 & 1 & 1
\end{array}\right].\]
The counting invariant $\Phi^{\mathbb{Z}}_X(L)$ with respect to this biquandle
is $0$ if $L$ is a virtual link with any component containing an odd number of 
crossing points and is $2^c$ where $c$ is the number of components of $L$ 
otherwise. Our \texttt{python} computations reveal 
biquandle bracket structures on $X$ with coefficients in $\mathbb{Z}_5$
including
\[\left[\begin{array}{rr|rr}
1 & 3 & 4 & 2 \\
4 & 1 & 1 & 4
\end{array}\right].\]
The Hopf link has four $X$-labelings and fundamental biquandle bracket value
\[\phi=A_{x,y}A_{y,x}\delta^2+B_{x,y}A_{y,x}\delta+A_{x,y}B_{y,x}\delta+B_{x,y}B_{y,x}\delta^2.\]
Then we have $\delta=2$, $w=1$ and
\[\begin{array}{cc|c}
x & y & \phi \\ \hline
1 & 1 & 1(1)(2^2)+1(4)(2)+4(1)(2)+ 4(4)(2^2) =4+3+3+4=4\\
1 & 2 & 3(4)(2^2)+2(4)(2)+3(1)(2)+ 2(1)(2^2) =3+1+1+3=3\\
2 & 1 & 4(3)(2^2)+1(3)(2)+4(2)(2)+ 1(2)(2^2) =3+1+1+3=3\\
2 & 2 & 1(1)(2^2)+1(4)(2)+4(1)(2)+ 4(4)(2^2) =4+3+3+4=4\\
\end{array}\]
or in more pictorial form,
\[\includegraphics{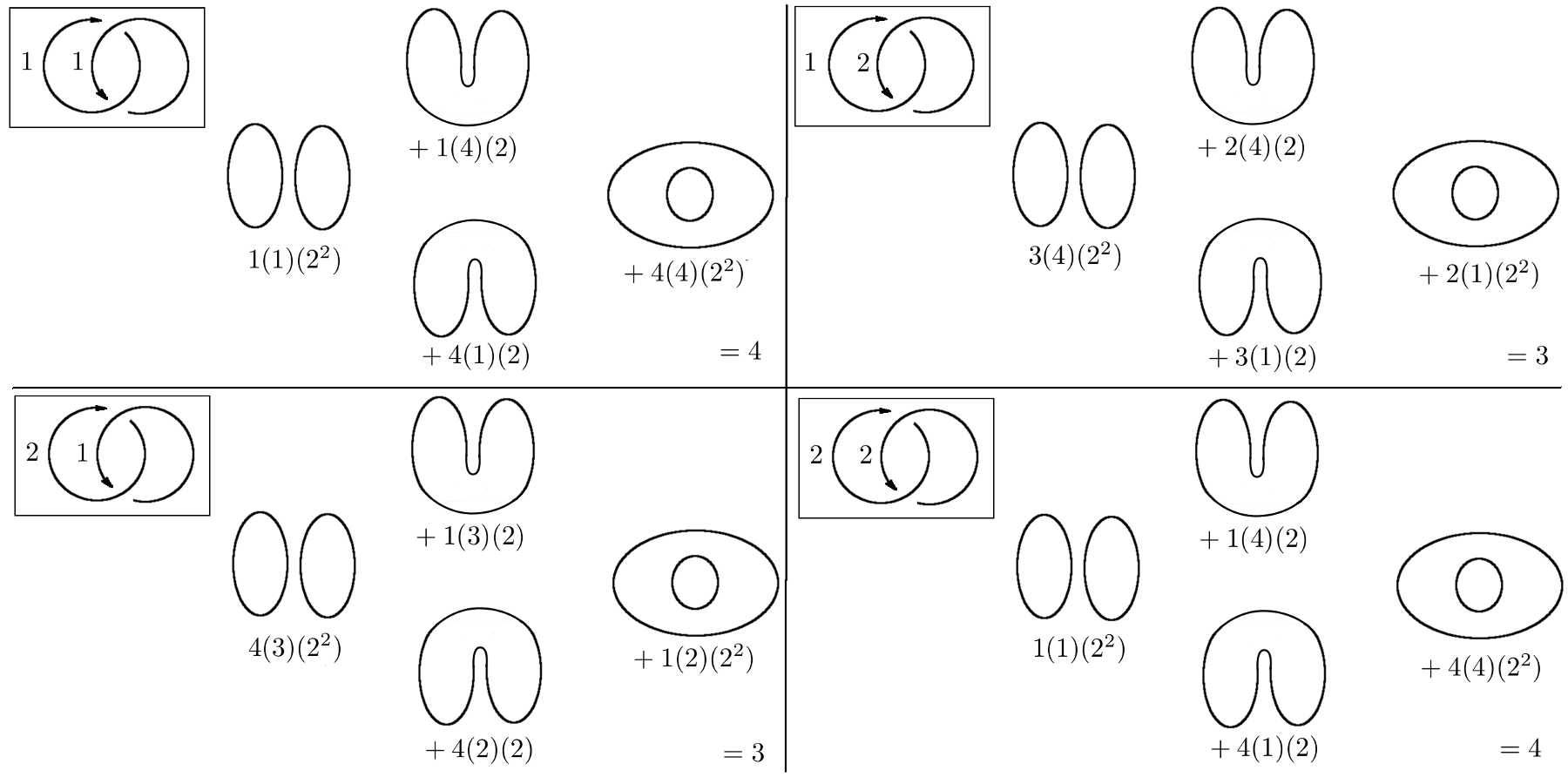}\]
Then the Hopf link has biquandle bracket invariant
\[\Phi_X^{\beta,M}(L)=\{3,3,4,4\}\]
or in ``polynomial'' form
\[\Phi_X^{\beta}(L)=2u^3+2u^4\]
while the unlink of two components $U_2$ has invariant value
\[\Phi_X^{\beta}(U_2)=4u^4.\]
\end{example}

\begin{example}
Let $X$ be any finite biquandle and $R$ be any commutative ring with identity.
For any invertible element $t\in R$, the maps $A_{x,y}=t$, $B_{x,y}=t^{-1}$ 
with $\delta=-t^{-2}-t^2$ and $w=t^3$ define a biquandle bracket 
$\beta_t$ called a \textit{constant biquandle 
bracket}. Since in this case the skein coefficients do not depend on the 
biquandle colors, each $X$-coloring gets the same state-sum value, namely
$\Phi_X^{\mathbb{Z}}(L) u^{K_L(t)}$ where $K_L(t)$ is the Kauffman bracket 
polynomial of $L$ evaluated at $t$; hence,
for any link $L$, the biquandle bracket invariant with respect to 
$\beta_t$ is $\Phi_X^{\beta}(L)=\Phi_X^{\mathbb{Z}}(L) u^{K_L(t)}$.

For example, if $R=\mathbb{Z}_7$ and $t=2$, we have $2^{-1}=4$, 
$\delta=-4^2-2^2=-16-4=-20=1$
and $w=t^3=2^3=8=1$; then in the Hopf link example above we have 
\[\includegraphics{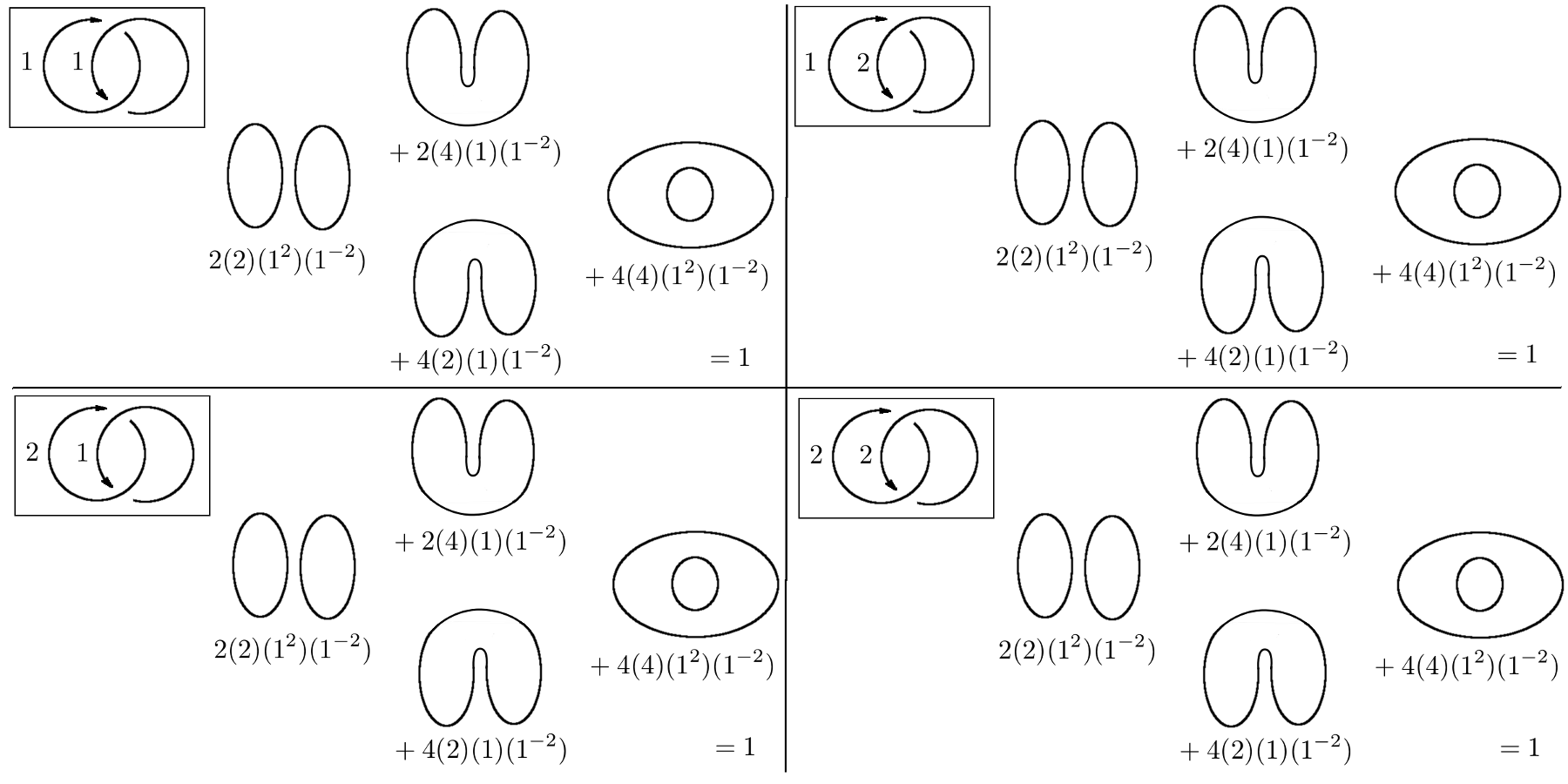}\]
so we have 
\[\Phi_X^{\beta}(L)=4u=\Phi_X^{\mathbb{Z}}(L) u^{K_L(t)}\]
since we have $\Phi_X^{\mathbb{Z}}(L)=4$ and 
$K_L(2)=\left. 1+t^{-4}+t^{-8}+t^{-12}\right|_{t=2}=1+4+2+1=1$.
\end{example}

\begin{example}
More generally, an $X$-bracket in which for all $x,y\in X$ we have 
$A_{x,y}=A$ and $B_{x,y}=B$ defines a skein invariant which does not use the 
biquandle colors; the biquandle bracket invariant will then be 
$\Phi_X^{\mathbb{Z}}(K)$ copies of the skein invariant thus defined.
For instance, if $A+A^{-1}=B+B^{-1}$, we have a biquandle bracket 
$\beta$ satisfying 
$\Phi_X^{\beta_{A,B}}(L)=\Phi_X^{\mathbb{Z}}(L) u^{K_L(-A^2B^{-1},A+A^{-1})}$ where 
$K(a,z)$ is the Kauffman 2-variable polynomial, as one can easily see 
by comparing the biquandle bracket skein relation with the usual Kauffman 
2-variable skein relation.
Similarly, a biquandle bracket $\beta$ with $A_{x,y}=\alpha A$ and 
$B_{x,y}=B$ with $\alpha^{-1}B+\alpha B^{-1}=0$ yields
$\Phi_X^{\beta_{a,z}}(L)=\Phi_X^{\mathbb{Z}}(L) u^{H_L(\alpha,\alpha^{-1}A+\alpha A^{-1})}$
where $H(a,z)$ is the HOMFLY-PT polynomial \cite{L}.
\end{example}

\begin{example}
Let $X$ be a finite biquandle, $G$ an abelian group,
and $\psi\in H^2(X;G)$ an element of the second cohomology of $X$ with
$G$ coefficients, i.e., a function $\psi:X\times X\to G$ satisfying 
for all $x,y,z\in X$
\[\psi(x,y)\psi(y,z)\psi(x\otr y,z\utr y)
=\psi(x,z)\psi(y\otr x,z\otr x)\psi(x\utr z,y\utr z)\]
and $\psi(x,x)=1$ (see \cite{CEGN} for instance). 
Then setting $A_{x,y}=B_{x,y}=\psi(x,y)$ defines a biquandle
bracket with $R=\mathbb{Z}[G]$. Indeed, every biquandle bracket with 
$A_{x,y}=B_{x,y}$ for all $x,y\in X$ arises in this way, since the biquandle 
bracket conditions with $A_{x,y}=B_{x_y}$ reduce to $\delta=-2$, $w=-1$ and
the $2$-cocycle condition
\[A_{x,y}A_{y,z}A_{x\utr y,z\otr y}  =  A_{x,z}A_{y\otr x,z\otr x}A_{x\utr z,y\utr z}.\]
The biquandle bracket invariant in this case satisfies
\[\Phi^{\beta}_X(L)=\Phi^{\psi}_X(L)K_L(1)\]
where $K_L(1)$ is the Kauffman bracket polynomial of
$L$ evaluated at $A=1$.
\end{example}

\begin{proposition}\label{cohom}
Let $X$ be a finite biquandle, $R$ be a commutative ring and $C:X\to R^{\times}$
be a map as in example \ref{ex:coboundary}, and let 
$\gamma:X\times X\to \mathbb{R}^{\times}$ be the biquandle bracket defined by 
setting both $A$ and $B$ equal to
\[\gamma(x,y)=C(x)C(y)^{-1}C(x\utr y)^{-1}C(y\otr x).\]
Then for any biquandle bracket $\beta$ defined by $A,B:X\times X\to R^{\times}$, 
the maps
\[A'_{x,y}=A_{x,y}\gamma(x,y)\quad \mathrm{and}\quad B'_{x,y}=B_{x,y}\gamma(x,y)\]
define a biquandle bracket $\gamma\beta$ with 
$\delta=-A_{x,y}B_{x,y}^{-1}-A_{x,y}^{-1}B_{x,y}$ and we have
$\Phi_X^{\beta}=\Phi_X^{\gamma\beta}$.
\end{proposition}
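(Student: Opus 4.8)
The plan is to verify in turn that $\gamma\beta$ is a genuine biquandle bracket with the same $\delta$ and $w$ as $\beta$, and then that it produces the same state sum as $\beta$ on every colored diagram. First I would record the one fact about $\gamma$, besides the cocycle identity, that makes everything go through: since $x\utr x=x\otr x$ for all $x\in X$ by biquandle axiom (i), we get $\gamma(x,x)=C(x)C(x)^{-1}C(x\utr x)^{-1}C(x\otr x)=1$, so $A'_{x,x}=A_{x,x}$ and $B'_{x,x}=B_{x,x}$. Consequently $w'=\delta A'_{x,x}+B'_{x,x}=\delta A_{x,x}+B_{x,x}=w$, and bracket axiom (i) for $\gamma\beta$ is literally axiom (i) for $\beta$. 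Axiom (ii) is immediate because the common factor cancels: $A'_{x,y}(B'_{x,y})^{-1}=A_{x,y}B_{x,y}^{-1}$, whence $\delta'=-A'_{x,y}(B'_{x,y})^{-1}-(A'_{x,y})^{-1}B'_{x,y}=-A_{x,y}B_{x,y}^{-1}-A_{x,y}^{-1}B_{x,y}=\delta$.

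For axiom (iii) the key structural observation is that in each of the five defining equations every monomial on the left-hand side is a product of three coefficients indexed by the pairs $(x,y)$, $(y,z)$, $(x\utr y,z\otr y)$, and every monomial on the right-hand side is a product of three coefficients indexed by $(x,z)$, $(y\otr x,z\otr x)$, $(x\utr z,y\utr z)$ (the explicit $\delta$'s are unaffected since $\delta'=\delta$). Replacing $A,B$ by $A'=A\gamma$, $B'=B\gamma$ therefore multiplies every left monomial by $\Gamma_L:=\gamma(x,y)\gamma(y,z)\gamma(x\utr y,z\otr y)$ and every right monomial by $\Gamma_R:=\gamma(x,z)\gamma(y\otr x,z\otr x)\gamma(x\utr z,y\utr z)$. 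By the $2$-cocycle identity for $\gamma$ verified in Example \ref{ex:coboundary} — which is precisely the statement that $\gamma$ satisfies the first equation of bracket axiom (iii), and whose proof used only the exchange laws — we have $\Gamma_L=\Gamma_R$; calling this common unit $\Gamma\in R^{\times}$, each equation for $\gamma\beta$ is obtained from the corresponding equation for $\beta$ by multiplying both sides by $\Gamma$, so it holds. Thus $\gamma\beta$ is a biquandle bracket, and the formulas for $\delta$ and $w$ are as claimed.

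For the equality of invariants, fix an $X$-coloring $f$ of an oriented diagram $L$. At a crossing $c$, writing $x,y$ for the pair of colors on which its smoothing coefficients depend and $\epsilon_c=\pm1$ for its sign, the two smoothing coefficients of $\gamma\beta$ are those of $\beta$ each multiplied by the single factor $\gamma(x,y)^{\epsilon_c}$, which depends on $c$ and on $f$ but not on which of the two smoothings is chosen. Since also $w'=w$, for every state the $\gamma\beta$-contribution equals the $\beta$-contribution times $\prod_c\gamma(x_c,y_c)^{\epsilon_c}$, a factor that is the same for all states; hence $(\gamma\beta)(f)=\bigl(\prod_c\gamma(x_c,y_c)^{\epsilon_c}\bigr)\,\beta(f)$. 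It then remains to see that $\prod_c\gamma(x_c,y_c)^{\epsilon_c}=1$: expanding each $\gamma(x_c,y_c)=C(x_c)C(y_c)^{-1}C(x_c\utr y_c)^{-1}C(y_c\otr x_c)$ and regrouping the product by semiarcs, each semiarc of $L$ contributes $C(\cdot)^{+1}$ at one of its two crossing-ends and $C(\cdot)^{-1}$ at the other, so the whole product collapses to $1$ (this is the multiplicative analogue of the classical fact that a $2$-coboundary has trivial Boltzmann weight; cf.\ the cited biquandle homology references). Therefore $(\gamma\beta)(f)=\beta(f)$ for every $f\in\mathrm{Hom}(\mathcal{B}(L),X)$, so the multisets $\{\beta(f)\}$ and $\{(\gamma\beta)(f)\}$ coincide, and hence $\Phi_X^{\beta}=\Phi_X^{\gamma\beta}$ (and likewise for the multiset form).

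The verifications of axioms (i) and (ii) and the bookkeeping that turns axiom (iii) for $\gamma\beta$ into axiom (iii) for $\beta$ times the unit $\Gamma$ are routine once one has the cocycle identity from Example \ref{ex:coboundary}. The one place that requires genuine care — and the part I expect to be the main obstacle — is the final telescoping: one must check, with the correct under/over and positive/negative conventions at each crossing, that $\prod_c\gamma(x_c,y_c)^{\epsilon_c}$ really does cancel semiarc-by-semiarc. If one prefers, this can simply be cited as the statement that a biquandle $2$-coboundary contributes trivially to the cocycle (Boltzmann weight) invariant, rather than re-deriving the cancellation by hand.
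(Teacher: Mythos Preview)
Your proof is correct and follows essentially the same route as the paper's: both arguments handle axiom (iii) by observing that the common cocycle factor $\Gamma_L=\Gamma_R$ (verified in Example~\ref{ex:coboundary}) multiplies every monomial, and both obtain $\Phi_X^{\beta}=\Phi_X^{\gamma\beta}$ by distributing the $C$-factors to the semiarc endpoints and noting they cancel in pairs along each semiarc. Your write-up is somewhat more explicit algebraically---you spell out $\gamma(x,x)=1$ to get $w'=w$ and the cancellation in $\delta'=\delta$, which the paper leaves implicit---while the paper leans on two pictures for the semiarc cancellation rather than your verbal telescoping argument; but the underlying ideas are identical.
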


\begin{proof}
In $\gamma\beta$, the invertible quantity \[C(x)C(y\otr x)C(z)^{-1}C(y\utr z)^{-1}
C((x\utr y)\utr (z\otr y))^{-1}C((z\otr y)\otr (x\utr y))\]
factors out of each term on both sides of the equations in biquandle bracket 
axiom (iii), so $\gamma\beta$ is a biquandle bracket provided $\beta$ is. 

To see that $\beta$ and $\gamma\beta$ define the same invariant, note that
we can picture $\gamma\beta$ as including factors of $C(x),$ $C(y\otr x)$, 
$C(y)^{-1}$, $C(x\utr y)^{-1}$ and on the initial and terminal ends of the 
semiarc respectively along with the $A_{x,y}$ and $B_{x,y}$ coefficients as shown.
\[\includegraphics{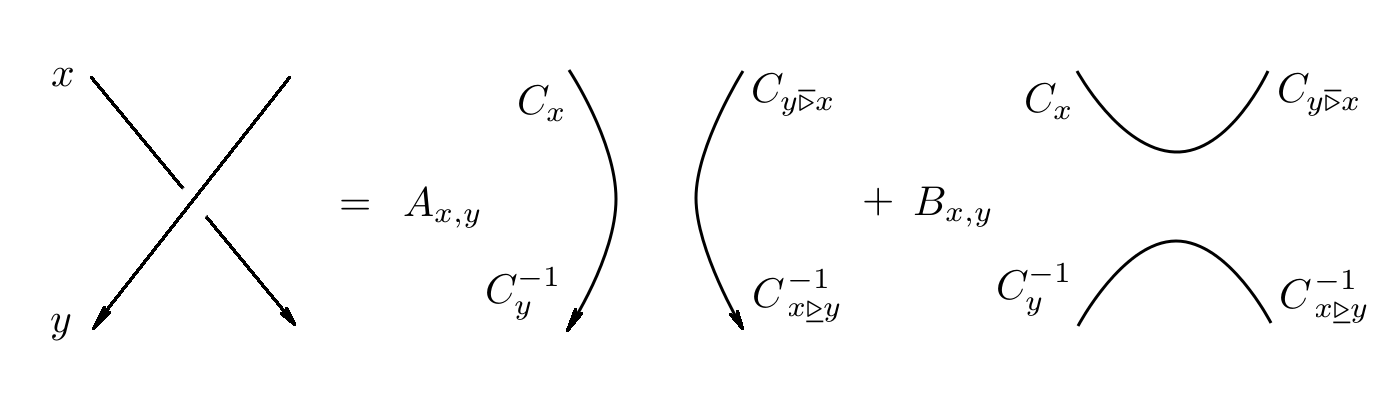}\]
\[\includegraphics{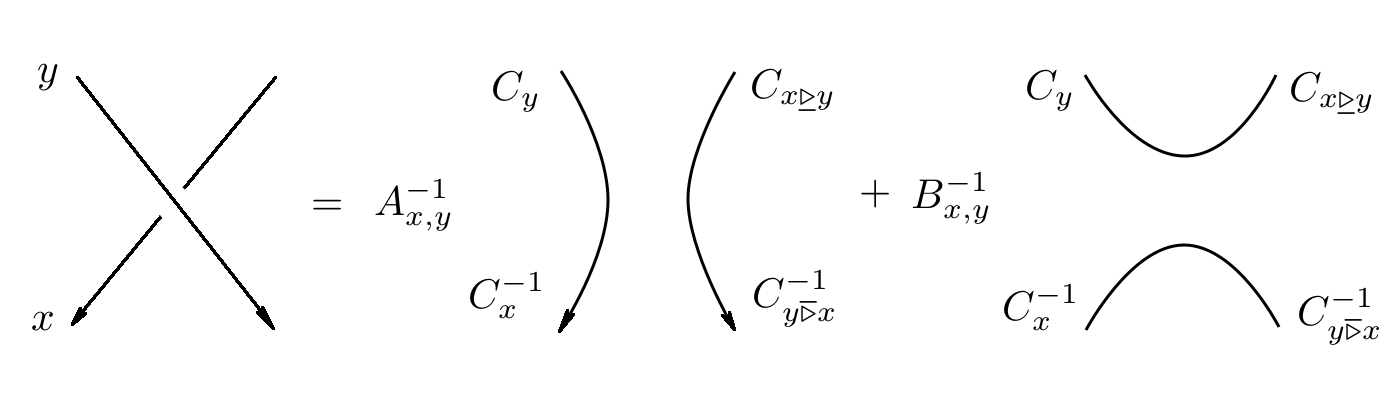}\]
Then we observe that over any complete link diagram, the $C$ factors match up 
in canceling pairs along each semiarc, so the value of each state of an 
$X$-colored link in $\Phi_X^{C\beta}$ is the same as in $\Phi_X^{\beta}$. 
\end{proof}

For biquandle brackets $\beta$ representing biquandle 2-cocycles, 
$\gamma$ is a coboundary and $\beta$ and $\gamma\beta$ are cohomologous; 
however, Proposition \ref{cohom} holds even for biquandle brackets $\beta$ not
representing cocycles. Thus, it is tempting to define $\beta$ and
$\gamma\beta$ to be ``cohomologous'' regardless of whether $\beta$ is a cocycle;
however, we will settle for the following:

\begin{definition}
Two $X$-brackets $\beta$ and $\beta'$ over $R$ are \textit{$C$-equivalent}
if there is a map $C:X\to R^{\times}$ such that for all $x,y\in X$, we have
\[\begin{array}{rcl}
A'_{x,y} & = & A_{x,y} C(x)C(y)^{-1}C(x\utr y)^{-1}C(y\otr x) \ \mathrm{and} \\
B'_{x,y} & = & B_{x,y} C(x)C(y)^{-1}C(x\utr y)^{-1}C(y\otr x).
\end{array}\]
\end{definition}

\begin{corollary}
$C$-equivalent $X$-brackets define the same invariant $\Phi_X^{\beta}$.
\end{corollary}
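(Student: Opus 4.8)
The plan is to observe that $C$-equivalence is precisely the relation ``$\beta' = \gamma\beta$ for some coboundary factor $\gamma$,'' so the corollary should fall out of Proposition \ref{cohom} with essentially no extra work. First I would unpack the definition of $C$-equivalence: if $\beta$ and $\beta'$ are $C$-equivalent via a map $C:X\to R^{\times}$, then setting $\gamma(x,y)=C(x)C(y)^{-1}C(x\utr y)^{-1}C(y\otr x)$ gives $A'_{x,y}=A_{x,y}\gamma(x,y)$ and $B'_{x,y}=B_{x,y}\gamma(x,y)$ for all $x,y\in X$. This is exactly the recipe producing the bracket $\gamma\beta$ in the statement of Proposition \ref{cohom}, so $\beta'=\gamma\beta$ as $X$-brackets (in particular $\beta'$ really is a biquandle bracket, which is part of the content of that proposition, and its $\delta$ agrees with that of $\beta$).

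Next I would simply invoke Proposition \ref{cohom}, which asserts $\Phi_X^{\beta}=\Phi_X^{\gamma\beta}$. Combining the two observations yields $\Phi_X^{\beta}=\Phi_X^{\gamma\beta}=\Phi_X^{\beta'}$, which is the claim; the same argument applies verbatim to the multiset form $\Phi_X^{\beta,M}$ since the polynomial form only repackages it. If one wants the statement to be literally an equivalence relation (symmetric and transitive), I would also note in passing that replacing $C$ by $C^{-1}$ reverses the roles of $\beta$ and $\beta'$, and that composing two such maps $C_1,C_2$ corresponds to multiplying the coboundary factors, so $C$-equivalence classes are well defined; but this is not needed for the invariance claim itself.

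I do not expect a genuine obstacle here: the corollary is a direct corollary in the strict sense, and the only thing to be careful about is matching notation — confirming that the formula for $\gamma(x,y)$ in the definition of $C$-equivalence is identical to the one used in Proposition \ref{cohom} and in Example \ref{ex:coboundary}, and that ``the same invariant'' is understood to mean equality of $\Phi_X^{\beta}$ (equivalently $\Phi_X^{\beta,M}$) on all classical and virtual links, exactly as established there. If anything, the mild subtlety is purely bookkeeping: ensuring the reader sees that no cocycle hypothesis on $\beta$ is being used, precisely because Proposition \ref{cohom} was already proved without one.
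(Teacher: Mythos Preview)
Your proposal is correct and matches the paper's approach exactly: the corollary is stated without an explicit proof precisely because it is immediate from the definition of $C$-equivalence together with Proposition~\ref{cohom}, and your write-up simply spells out that immediacy. The additional remarks about symmetry/transitivity and the multiset form are fine but, as you note, unnecessary for the claim.
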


In \cite{NR}, \textit{quantum enhancements} of the counting invariant
with respect to involutory biquandles $X$ were defined as functors from
the category of $X$-labeled unoriented tangles to an $R$-module category.
Biquandle brackets provide examples of quantum enhancements as defined
in \cite{NR} in the following way: Given a biquandle bracket $\beta$,
define 
\[
I=\left[\begin{array}{rr} 1 & 0 \\ 0 & 1\end{array}\right],\quad
N=\left[\begin{array}{rrrr}
0 & A_{11} & -B_{11} & 0 
\end{array}\right]\quad\mathrm{and}
\quad
U=\left[\begin{array}{c}
0 \\ -B_{11}^{-1} \\ A_{11}^{-1} \\ 0 
\end{array}\right].
\]
Then the biquandle bracket skein relation yields $X$-labeled 
$R$-matrices $X_{x,y}^{\pm 1}$:
\begin{eqnarray*}
X_{x,y} & = &  A_{x,y}(I\otimes I)+B_{x,y}(UN)\\
& = & \left[\begin{array}{cccc}
A_{x,y} & 0 & 0 & 0 \\
0 & 0 & B_{x,y} & 0 \\
0 & B_{x,y} & A_{x,y}-A_{x,y}^{-1}B_{x,y}^2 & 0 \\
0 & 0 & 0 & A_{x,y}
\end{array}\right].
\end{eqnarray*}
See \cite{S} for more.

\begin{example}
The biquandle bracket in example \ref{ex1} corresponds to quantum weight
over $\mathbb{Z}_5$ given by
\[
I=\left[\begin{array}{rr} 1 & 0 \\ 0 & 1\end{array}\right],\quad
U=\left[\begin{array}{rrrr}
0 & 1 & 1 & 0 
\end{array}\right],\quad
N=\left[\begin{array}{c}
0 \\ 1 \\ 1 \\ 0 
\end{array}\right],
\]
\[X_{1,1}=\left[\begin{array}{cccc}
1 & 0 & 0 & 0 \\
0 & 0 & 4 & 0 \\
0 & 4 & 0 & 0 \\
0 & 0 & 0 & 1
\end{array}\right],\quad 
X_{1,2}=\left[\begin{array}{cccc}
3 & 0 & 0 & 0 \\
0 & 0 & 2 & 0 \\
0 & 2 & 1 & 0 \\
0 & 0 & 0 & 3
\end{array}\right],
\]
\[X_{2,1}=\left[\begin{array}{cccc}
4 & 0 & 0 & 0 \\
0 & 0 & 1 & 0 \\
0 & 1 & 0 & 0 \\
0 & 0 & 0 & 4
\end{array}\right],\quad\mathrm{and}\quad 
X_{2,2}=\left[\begin{array}{cccc}
1 & 0 & 0 & 0 \\
0 & 0 & 4 & 0 \\
0 & 4 & 0 & 0 \\
0 & 0 & 0 & 1
\end{array}\right].\]
In particular, this quantum enhancement is an example of a 
\textit{strongly heterogeneous} quantum weight as defined in the questions 
in \cite{NR}, since $X_{1,2}$ is not a classical $R$-matrix. 
\end{example}

\begin{example}
Let $X$ be the biquandle defined by the operation matrix
\[\left[\begin{array}{rr|rr}
2 & 2 & 2 & 2 \\
1 & 1 & 1 & 1
\end{array}\right]\] 
and let $R=\mathbb{F}_8=\mathbb{Z}_2[t]/(1+t+t^3)$ be the Galois field of
eight elements. That is, $R$ is the ring of polynomials in one variable
with $\mathbb{Z}_2$ coefficients with the rule that $t^3=1+t$. Then our
\texttt{python} computations reveal that
\[\left[\begin{array}{cc|cc}
1 & 1+t & t & t+t^2 \\
1+t^2 & 1 & 1 & t
\end{array}\right]\]
defines a biquandle bracket. We can describe this one
without explicitly referencing biquandles in the following way: Given any 
oriented link $L$ of $c$ components, find the the $2^c$ ways to color the 
semiarcs of $L$ alternately solid and dotted going around each component 
(or for virtual links lacking such a coloring, set the invariant value to 
zero). Then for each such coloring, expand using the following skein relations.\footnote{Thanks to Zhiqing Yang for catching a misprint in an earlier version of this table.}
\[\includegraphics{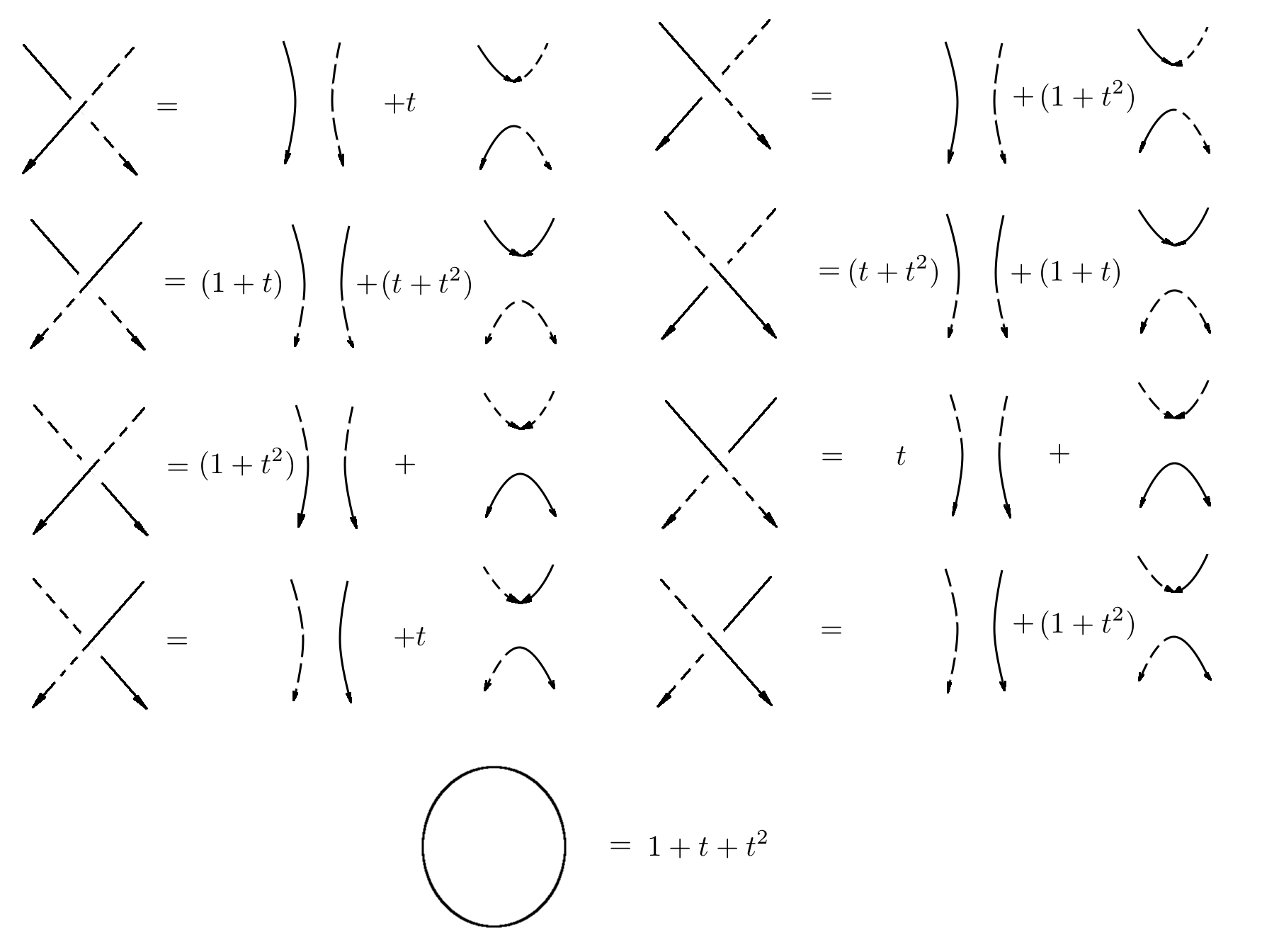}\]
Note that while fully resolved closed curves have some solid and some dashed 
sections and lack globally consistent orientations, every closed curve 
evaluates simply to $1+t+t^2$.
Finally, multiply by the writhe normalization factor $t^p(1+t^2)^n$
where $p$ and $n$ are the numbers of positive and negative crossings
respectively and collect these values into a multiset over the set of all
colorings of $L$.

We computed this invariant for all prime classical knots with up to eight 
crossings and all prime classical links with up to seven crossings
as found in the tables at 
the knot atlas \cite{KA}. The results are collected in the tables below.
We list the multiset version of the invariant for ease of reading since this
ring is not $\mathbb{Z}$ or $\mathbb{Z}_n$.
We start with the prime classical knots:
\[\begin{array}{r|l}
\Phi_X^{\beta}(K) & K \\ \hline
\{2\times 1\} & 5_2, 7_5, 8_{10}, 8_{11}, 8_{13}, 8_{17} \\
\{2\times t\} & 3_1, 6_2, 8_9 \\
\{2\times 1+t\} & 4_1, 7_1, 7_4, 8_5, 8_{14} \\
\{2\times t^2\} & 6_1, 6_3, 7_2, 7_3, 8_7, 8_{21} \\
\{2\times 1+t^2\} & 7_7, 8_2, 8_3, 8_4, 8_8, 8_{19}, 8_{20} \\
\{2\times t+t^2\} & 8_1, 8_6, 8_{12}, 8_{16}, 8_{18} \\
\{2\times 1+t+t^2\} & \mathrm{Unknot}, 5_1, 7_6, 8_{15}
\end{array}\]
For prime classical links with up to seven crossings, we have
\[\begin{array}{r|l}
\Phi_X^{\beta}(L) & L \\ \hline
\{2\times 1, 2\times t\} & L6a2 \\
\{2\times t, 2\times 1+t^2\} & L7a6 \\
\{2\times 1, 2\times t+t^2\} & L6a1 \\
\{2\times 1, 2\times 1+t+t^2\} & L7a5 \\
\{2\times t^2, 2\times 1+t+t^2\} & L7a2, L7n1 \\
\{2\times 1+t^2, 2\times t+t^2\} & L2a1 \\
\{2\times 1+t^2, 2\times 1+t\} & L4a1 \\
\{2\times t+t^2, 2\times 1+t+t^2\} & L6a3 \\
\{4\times t^2\} & L5a1 \\
\{4\times t+t^2\} & L7a1, L7a3, L7a4 \\
\{2\times 1, 6\times t+t^2\} & L6a4, L6n1 \\
\{2\times t^2, 6\times 1+t^2\} & L6a5 \\
\{2\times t, 6\times t+t^2\} & L7a7. \\
\end{array}\]
We note that:
\begin{itemize}
\item $\Phi_X^{\beta}$ distinguishes the right- and left-hand trefoils, with
invariant values of $\{2\times t\}$ and $\{2\times 0\}$ respectively and
hence can distinguish mirror images,
\item $\Phi_X^{\beta}$ distinguishes the 
Square knot from the Granny knot with invariant values of $\{2\times t+t^2\}$
and $\{2\times 0\}$ respectively, so $\Phi_X^{\beta}$ is not determined
by the knot group, and
\item If we compute this invariant via the skein expansion rather than the 
state-sum method, it is important to freeze the diagram in place and not 
change any smoothed diagrams by Reidemeister moves, since these can change 
the value of the invariant.
\end{itemize}
\end{example}

\section{\large\textbf{Quandle Brackets}}\label{Qb}

Let $X$ be a \textit{quandle}, that is, a biquandle with $x\otr y=x$ for all
$x,y\in X$. An $X$-bracket in this case is called a \textit{quandle bracket}.

\begin{example}
Consider the dihedral quandle $X$ on three elements, with operation table
\[\left[\begin{array}{rrr|rrr}
1 & 3 & 2 & 1 & 1 & 1 \\
3 & 2 & 1 & 2 & 2 & 2 \\
2 & 1 & 3 & 3 & 3 & 3
\end{array}\right].\]
Then $X$-bracket over $\mathbb{Z}_{11}$ include $\beta$ given by
\[\left[\begin{array}{rrr|rrr}
1 & 7 & 7 & 7 & 5 & 5 \\
1 & 1 & 8 & 7 & 7 & 1 \\
1 & 8 & 1 & 7 & 1 & 7
\end{array}\right].\]
This is not a quandle $2$-cocycle since $A_{x,y}\ne B_{x,y}$; our
\texttt{python} code computed the following values for prime knots with up to
eight crossings:
\[\begin{array}{r|l}
\Phi_X^{\beta}(K) & K \\ \hline
3 & 6_2 \\
3u & 5_2, 7_3, 7_6, 8_1,8_2 \\
3u^2 & 4_1, \\
3u^3 & 5_1, 8_8 \\
3u^5 & 7_1, 7_5, 8_6,8_{12} \\
3u^6 & 6_3, 8_3, 8_{14} \\
3u^7 & \mathrm{Unknot}, 3u^7 \\ 
3u^9 & 7_2, 8_7, 8_{13}, 8_{17} \\
3u^{10} & 8_9, 8_{16} \\
9u & 8_{20} \\
9u^2 & 8_{10} \\
9u^3 & 8_{11}, 8_{15} \\
9u^5 & 7_4,8_5 \\
9u^7 & 8_{19} \\
9u^8 & 7_7 \\
9u^{10} & 8_{21} \\
27u^2 & 8_{18}.
\end{array}\]
\end{example}

\begin{proposition}\label{qcond}
If $X$ is a quandle and $R$ is a commutative ring, then maps 
$A,B:X\times X\to R^{\times}$ defining a quandle bracket must
satisfy the \textit{mixed cocycle conditions}
\[\begin{array}{rcll}
A_{x,y}A_{x\utr y,z} & = & A_{x,z}A_{x\utr z,y\utr z} & (i) \\
A_{x,y}B_{x\utr y,z} & = & B_{x,z}A_{x\utr z,y\utr z} & (ii)\\
B_{x,y}A_{x\utr y,z} & = & A_{x,z}B_{x\utr z,y\utr z} & (iii)\\
B_{x,y}B_{x\utr y,z} & = & B_{x,z}B_{x\utr z,y\utr z} & (iv).
\end{array}\]
\end{proposition}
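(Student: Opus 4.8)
The plan is to specialize biquandle bracket axiom (iii) to the quandle case, where $x \otr y = x$ for all $x,y \in X$. Under this specialization, the subscripts simplify dramatically: $z \otr y = z$, $y \otr x = y$, $z \otr x = z$, and in particular $x \utr y$ remains as is, but any expression of the form $u \otr v$ collapses to $u$. So the typical bracket subscript $x \utr y, z \otr y$ becomes $x \utr y, z$, and $y \otr x, z \otr x$ becomes $y, z$. The five equations of axiom (iii) should then become five relations among the $A$'s and $B$'s with these simplified subscripts, and my task is to show these five equations collectively force the four ``mixed cocycle conditions'' listed in the statement.

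First I would rewrite each of the five equations of axiom (iii) with the quandle substitution. The first three equations are monomial identities and should directly yield conditions (i), (ii), and (iii) — or rather, conditions closely resembling them. Specifically, the first equation $A_{x,y}A_{y,z}A_{x\utr y,z\otr y} = A_{x,z}A_{y\otr x,z\otr x}A_{x\utr z,y\utr z}$ becomes $A_{x,y}A_{y,z}A_{x\utr y,z} = A_{x,z}A_{y,z}A_{x\utr z,y\utr z}$; cancelling the common factor $A_{y,z}$ (legitimate since all values lie in $R^\times$) gives exactly condition (i). Similarly the second and third equations should, after cancellation of a common $B_{y,z}$ or $A_{y,z}$ factor respectively, yield conditions (ii) and (iii). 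So three of the four mixed cocycle conditions fall out almost immediately.

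The remaining condition (iv), $B_{x,y}B_{x\utr y,z} = B_{x,z}B_{x\utr z,y\utr z}$, is the one I expect to require real work, since neither of the last two equations of axiom (iii) is a bare monomial identity — each has a sum of four terms on one side. The approach here is to take the fourth equation (or the fifth), substitute the quandle simplification, and then use conditions (i), (ii), (iii) already established together with axiom (ii) (the relation $\delta = -A_{x,y}B_{x,y}^{-1} - A_{x,y}^{-1}B_{x,y}$) to simplify. The idea is that once (i)–(iii) are known, the four-term side should collapse: three of the terms should combine via the $\delta$-relation into a form matching the fourth term's structure, ultimately isolating condition (iv). The main obstacle will be carefully managing the algebra in this four-term collapse — tracking which instances of $\delta$ to expand, and verifying the cancellations go through cleanly — but the $R^\times$-invertibility of all the $A$ and $B$ values means every intermediate division is justified, so it is a matter of bookkeeping rather than any genuine difficulty. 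I would present the derivation of (i)–(iii) crisply and then spend the bulk of the proof on the derivation of (iv) from the fourth equation of axiom (iii).
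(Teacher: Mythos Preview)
Your overall strategy matches the paper's, but there is a concrete bookkeeping error in the middle that would trip you up when you carry it out. The third equation of axiom~(iii) is
\[
B_{x,y}A_{y,z}B_{x\utr y,z\otr y}=B_{x,z}A_{y\otr x,z\otr x}B_{x\utr z,y\utr z},
\]
and under the quandle substitution this becomes $B_{x,y}A_{y,z}B_{x\utr y,z}=B_{x,z}A_{y,z}B_{x\utr z,y\utr z}$. Cancelling $A_{y,z}$ gives condition~(iv), \emph{not} condition~(iii). So the three ``easy'' monomial equations yield (i), (ii), (iv), and the condition requiring the four-term analysis is (iii): $B_{x,y}A_{x\utr y,z}=A_{x,z}B_{x\utr z,y\utr z}$.

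A second, smaller point: you propose to extract the remaining condition from the fourth equation alone. The paper actually uses \emph{both} the fourth and the fifth equations. After the quandle substitution and rearrangement they read
\[
A_{y,z}\bigl(A_{x,y}B_{x\utr y,z}-A_{x,z}B_{x\utr z,y\utr z}\bigr)=B_{y,z}\bigl(A_{x,z}A_{x\utr z,y\utr z}+\delta A_{x,z}B_{x\utr z,y\utr z}+B_{x,z}B_{x\utr z,y\utr z}\bigr)
\]
and
\[
A_{y,z}\bigl(B_{x,y}A_{x\utr y,z}-B_{x,z}A_{x\utr z,y\utr z}\bigr)=-B_{y,z}\bigl(A_{x,y}A_{x\utr y,z}+\delta B_{x,y}A_{x\utr y,z}+B_{x,y}B_{x\utr y,z}\bigr),
\]
and combining them (together with (i), (ii), (iv) and the $\delta$-relation) produces a factorization in which one factor is $B_{x,y}A_{x\utr y,z}-A_{x,z}B_{x\utr z,y\utr z}$ and the other is the unit $A_{y,z}+\delta B_{y,z}=-A_{y,z}^{-1}B_{y,z}^2$, yielding~(iii). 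Your plan to collapse the four-term side via the $\delta$-relation is exactly the right mechanism; just be prepared to bring in the fifth equation as well.
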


\begin{proof}
Suppose our biquandle $X$ is a quandle, i.e., $x\otr y=x$ for all $x,y\in X$.
Then the first three biquandle bracket conditions from the Reidemeister III 
move reduce to
\[\begin{array}{rcl}
A_{x,y}A_{y,z}A_{x\utr y,z} & = & A_{x,z}A_{y,z}A_{x\utr z,y\utr z} \\
A_{x,y}B_{y,z}B_{x\utr y,z} & = & B_{x,z}B_{y,z}A_{x\utr z,y\utr z} \\
B_{x,y}A_{y,z}B_{x\utr y,z} & = & B_{x,z}A_{y,z}B_{x\utr z,y\utr z} \\
\end{array}\quad \Rightarrow \quad
\begin{array}{rcl}
A_{x,y}A_{x\utr y,z} & = & A_{x,z}A_{x\utr z,y\utr z} \\
A_{x,y}B_{x\utr y,z} & = & B_{x,z}A_{x\utr z,y\utr z} \\
B_{x,y}B_{x\utr y,z} & = & B_{x,z}B_{x\utr z,y\utr z} \\
\end{array}
\]
yielding (i),(ii) and (iv). Then the remaining biquandle bracket equations say
\begin{eqnarray*}
A_{y,z}(A_{x,y}B_{x\utr y,z}-A_{x,z}B_{x\utr z,y\utr z}) & = & 
B_{y,z}(A_{x,z}A_{x\utr z,y\utr z}+\delta A_{x,z}B_{x\utr z,y\utr z}+B_{x,z}B_{x\utr z,y\utr z}) \\
A_{y,z}(B_{x,y}A_{x\utr y,z}-B_{x,z}A_{x\utr z,y\utr z}) & = & 
-B_{y,z}(A_{x,y}A_{x\utr y,z}+\delta B_{x,y}A_{x\utr y,z}+B_{x,y}B_{x\utr y,z})
\end{eqnarray*}
which then implies
\[
A_{y,z}(B_{x,y}A_{x\utr y,z}-A_{x,z}B_{x\utr z,y\utr z})  
=  \delta B_{y,z}(A_{x,z}B_{x\utr z,y\utr z} -A_{x,y}B_{x\utr y,z})\]
so we have
\[(B_{x,y}A_{x\utr y,z}-A_{x,z}B_{x\utr z,y\utr z})(A_{x_,y}+\delta B_{y,z})=0.\]
Then 
\[A_{y,z}+\delta B_{y,z} 
=A_{y,z}+(-A_{yz}B_{y,z}^{-1}-A_{y,z}^{-1}B_{y,z})B_{y,z}
=-A_{y,z}^{-1}B_{y,z}^2\]
is a unit in $R$, so $B_{x,y}A_{x\utr y,z}-A_{x,z}B_{x\utr z,y\utr z}=0$ as required. 
\end{proof}

We note that the converse to proposition \ref{qcond} is not true -- the
mixed cocycle conditions are necessary but not sufficient conditions for
maps $A:X\times X\to R$ to define a quandle bracket, as the next example 
demonstrates.

\begin{example}
Consider the trivial quandle on two elements, $T_2=\{1,2\}$ with 
$x\utr y=x\otr y=x$. The maps $A,B:X\times X\to \mathbb{Z}_3$ defined by
\[\left[\begin{array}{rr|rr}
1 & 1 & 1 & 1\\
1 & 2 & 1 & 2
\end{array}\right]\]
satisfy all four mixed cocycle conditions and also the conditions that
\[\delta=-A_{x,y}B_{x,y}^{-1}-A_{x,y}^{-1}B_{x,y}=-2=1\]
and
\[w=2=A_{x,x}\delta+B_{x,x}\]
for all $x,y\in X$; however, this is not a biquandle bracket since
$A_{1,2}A_{2,2}B_{12}=2$ but 
\[A_{1,2}B_{2,2}A_{1,2}+A_{1,2}A_{2,2}B_{1,2}-2A_{1,2}B_{2,2}B_{12}+B_{1,2}B_{2,2}B_{1,2}=4=1\ne 2\]
so the fourth equation in biquandle bracket axiom (iii) is not satisfied.
\end{example}

\section{\large\textbf{Questions}}\label{Q}

We end with some questions for future research. This is second paper in an
ongoing series on quantum enhancements; future papers are underway extending
the present results to knotted surface and virtual knots in various ways.

What exactly is the relationship between biquandle and brackets biquandle 
cohomology? Is there a generalized theory of biquandle cohomology which 
includes those biquandle brackets which are not biquandle cocycles in
the traditional sense? Are there quantum enhancements which do not arise 
from biquandle brackets? What Khovanov homology-style categorifications
of biquandle bracket invariants are possible? What about 
biquandle-colored skein modules?

\bibliography{sn-mo-vr-rev3}{}
\bibliographystyle{abbrv}

%
%
%
%
%
%
%

\bigskip

\noindent
\textsc{Department of Mathematical Sciences \\
Claremont McKenna College \\
850 Columbia Ave. \\
Claremont, CA 91711} 

\bigskip

\noindent
\textsc{Department of Mathematics \\
Harvey Mudd College\\
301 Platt Boulevard \\
Claremont, CA 91711
}

\end{document}